\providecommand{\U}[1]{\protect\rule{.1in}{.1in}}
\theoremstyle{plain}
\newtheorem{theorem}{Теорема}
\newtheorem{corollary}[theorem]{Следствие}
\newtheorem{lemma}[theorem]{Лемма}
\newtheorem{proposition}[theorem]{Предложение}
\numberwithin{equation}{section}
\numberwithin{theorem}{section}
\begin{document}
\title[Топологические
радикалы]{Топологические
радикалы, III. Cовместный
спектральный
радиус}
\author{Ю. В.
Туровский и
В. С. Шульман}
\address{Институт
математики и
механики
Национальной
Академии
Наук
Азербайджана%
\newline
ул. Ф. Агаева, 9,
Баку AZ1141,
Азербайджан%
\newline
\textit{Эл.адрес:
yuri.turovskii@gmail.com}%
\newline
\indent Вологодский
государственный
технический
университет%
\newline
ул. Ленина, 15,
Вологда 160000,
Россия%
\newline
\textit{Эл.адрес:
shulman.victor80@gmail.com}}
\thanks{2010 \textit{AMS Classif}. Primary 47D03; Secondary 46H05}
\thanks{\textit{Ключевые
слова и
фразы: }%
совместный
спектральный
радиус,
формула
Бергера-Вонга,
топологический
радикал,
инвариантное
подпространство}
\date{}
\dedicatory{Памяти наших
отцов ---
Владимира
Васильевича
Туровского и
Семена
Моисеевича
Шульмана,
офицеров
Советской
армии,
участников
Великой
Отечественной
войны}
\begin{abstract}
Показано,
что
совместный
спектральный
радиус $\rho(M)$
предкомпактного
множества $M$
операторов в
банаховом
пространстве
равен
большему из
двух чисел:
совместного
спектрального
радиуса $\rho_{e}(M)$
образа $M$ в
алгебре
Калкина и
BW-радиуса $r(M)$.
Получены
также
результаты
этого рода,
относящиеся
к общим
нормированным
алгебрам.
Доказательства
основаны на
теории
топологических
радикалов
нормированных
алгебр.

\end{abstract}
\maketitle

\section{Введение}

В 1960 году Рота и
Стрэнг \cite{RS}
ввели
понятие
\textit{совместного
спектрального
радиуса }$\rho$
(сокращенно,
ССР), полагая
для
ограниченного
множества $M$
элементов
нормированной
алгебры $A$,
\begin{equation}
\rho(M):=\underset{n\rightarrow\infty}{\lim}\left\Vert M^{n}\right\Vert
^{1/n}=\inf_{n}\left\Vert M^{n}\right\Vert ^{1/n}, \label{nach}%
\end{equation}
где норма
множества
понимается
как супремум
норм
элементов, а
произведение
множеств ---
как
множество
произведений:
$M_{1}M_{2}=\{ab:a\in M_{1},b\in M_{2}\}$.
Отображение
$M\longmapsto\rho(M)$ обладает
многими
удобными
аналитическими
и
алгебраическими
свойствами;
в частности,
при
аналитической
зависимости
$\lambda\longmapsto M=M\left(  \lambda\right)  $
отображение
$\lambda$ $\longmapsto\rho(M\left(  \lambda\right)  )$
субгармонично
\cite[теорема 3.5]{ShT2000}
(для
конечных
множеств это
было
доказано в
\cite[теорема 3.8]{T85}).

Понятие ССР
нашло
применение в
различных
областях
математики:
эволюционной
динамике,
теории
вэйвлетов,
разностных
уравнениях и
во многих
других (см.
например \cite{Jun}), а
также в
теории
операторов,
в частности,
в теории
инвариантных
подпространств
\cite{Sh84,Tur98,ShT2000}. Это
стимулировало
интерес к
нахождению
формул для
вычисления
ССР. Важная
формула для
вычисления
ССР
ограниченного
множества
матриц была
найдена в 1992
году
Бергером и
Вонгом \cite{BW}.
Чтобы её
привести,
введём,
следуя \cite{BW},
иную
спектральную
характеристику
ограниченного
подмножества
$M$ алгебры ---
\textit{BW-радиус}:
\begin{equation}
r(M):=\underset{n\rightarrow\infty}{\lim\sup\;}r_{n}\left(  M\right)  \text{,
где }r_{n}\left(  M\right)  =\sup\left\{  \rho\left(
a\right)  :a\in M^{n}\right\}  ^{1/n}. \label{nach2}%
\end{equation}
Ясно, что $r(M)\leq\rho(M)$. В
\cite{BW} было
доказано,
что
\begin{equation}
\rho(M)=r(M) \label{bw}%
\end{equation}
для любого
ограниченного
множества матриц.

Формула
Бергера-Вонга
(\ref{bw}) была
перенесена в
\cite{ShT2000} на
предкомпактные
подмножества
компактных
операторов в
банаховом
пространстве
и
использовалась
при изучении
операторных
полугрупп и
алгебр Ли.
Чтобы
убедиться в
её
полезности,
достаточно
увидеть, как
легко из нее
следует
(полученное
в \cite{Tur98}) решение
проблемы
вольтерровой
полугруппы
"\textit{всякая ли
полугруппа
вольтерровых
операторов
имеет
инвариантное
подпространство?}%
". В самом деле,
если $G$ ---
полугруппа
\textit{вольтерровых}
(то есть,
компактных
квазинильпотентных)
операторов,
то $r(M)=0$ для
любого
конечного
множества $M\subset G$.
Из (\ref{bw}) следует,
что $\rho(M)=0$ и тогда
линейная
оболочка
множества $M$
состоит из
вольтерровых
операторов
(см. например
\cite[предложение
3.5]{T85}). Значит,
линейная
оболочка
полугруппы $G$ ---
алгебра
вольтерровых
операторов,
и, по теореме
Ломоносова
\cite{Lom}, она имеет
инвариантное
подпространство.

Подчеркнём,
что оба
ограничения
компактности
в (\ref{bw})
существенны.
В \cite{Gu} приведен
пример пары
операторов $a,b$,
для которых
$r(\{a,b\})=0\neq\rho(\{a,b\}).$
Известны и
примеры
ограниченных
множеств
компактных
операторов,
для которых
равенство (\ref{bw})
не выполнено
(см. например
\cite[предложение
2.16]{PW}).

Мы получим
вариант
формулы
Бергера-Вонга,
свободный от
ограничения
компактности
операторов.
А именно,
будет
показано,
что для
любого
предкомпактного
множества $M\subset
\mathcal{B}\left(  \mathcal{X}\right)  $
справедливо
равенство
\begin{equation}
\rho(M)=\max\{r(M),\rho_{e}(M)\}, \label{gbwf}%
\end{equation}
где $\rho_{e}(M):=\rho(\pi(M))$ --- ССР
образа $M$ в
алгебре
Калкина $\mathcal{B}\left(
\mathcal{X}\right)  /\mathcal{K}\left(  \mathcal{X}\right)  $
при
каноническом
эндоморфизме
$\pi$ (здесь,
разумеется,
$\mathcal{B}\left(  \mathcal{X}\right)  $ и $\mathcal{K}\left(
\mathcal{X}\right)  $ --- алгебра
всех
ограниченных
операторов и
идеал всех
компактных
операторов в
банаховом
пространстве
$\mathcal{X}$). Мы
назвали (\ref{gbwf})
\textit{обобщённой
формулой
Бергера-Вонга}
(обобщенной
БВ-формулой)
в \cite{ShT2002}, где она
была
доказана для
операторов в
произвольном
рефлексивном
банаховом
пространстве,
но вскоре
выяснилось,
что есть
несколько
видов
формул,
обобщающих
(\ref{bw}), и нужно
уточнять
значение
термина.
Отметим, что
из (\ref{gbwf}) следует
справедливость
(\ref{bw}) для
предкомпактных
семейств
операторов
вида $\lambda1+K$, где $\lambda
\in\mathbb{C}$, а $K$ ---
компактный
оператор,
что было
основой
данного в \cite{ShT2000}
доказательства
того, что
всякая
алгебра Ли
вольтерровых
операторов
имеет
инвариантное
подпространство.

В этой
работе
большую роль
играет
теория
топологических
радикалов
нормированных
алгебр,
основы
которой были
заложены в
работе
Диксона \cite{Dix} и
которая
получила
дальнейшее
развитие в \cite{rad1,
rad2}. Удобство
одновременного
использования
алгебраического
и
операторного
подходов
объясняется
тем, что
первый
позволяет
вести
редукцию по
идеалам и
фактор-алгебрам
(где
работают
радикалы), а
второй --- по
инвариантным
подпространствам
и
фактор-пространствам.
Результаты
работы также
отражают
чередование
этих
подходов. Мы
сначала
получаем
"\textit{смешанную
БВ-формулу}",
выражающую
ССР
семейства
элементов
нормированной
алгебры
через
спектральные
характеристики
ассоциированного
семейства
операторов
двустороннего
умножения.
Из неё мы
выводим (\ref{gbwf}),
которую
теперь
называем
\textit{операторной
БВ-формулой}.

В свою
очередь, (\ref{gbwf})
используется
для
получения
варианта
формулы
вычисления
ССР,
пригодного
для
произвольных
нормированных
алгебр. Это ---
\textit{алгебраическая
БВ-фор%
\-%
му%
\-%
ла с
гипокомпактным
радикалом}:
\begin{equation}
\rho(M)=\max\{\rho(M/\mathcal{R}_{\mathrm{hc}}(A)),r(M)\}. \label{aibwf}%
\end{equation}
Заметим, что
$\mathcal{R}_{\mathrm{hc}}(A)\supset\mathcal{K}(\mathcal{X})$
при $A=\mathcal{B}(\mathcal{X})$ и
включение
может быть
строгим ---
например,
при $\mathcal{X}=l^{1}$ и в
ряде других
примеров все
слабо
компактные
операторы
входят в $\mathcal{R}_{\mathrm{hc}%
}(\mathcal{B}(\mathcal{X}))$ (см.
раздел \ref{ss43}).
Таким
образом, (\ref{aibwf})
не только
обобщает
операторную
БВ-формулу (\ref{gbwf}),
но и
усиливает её.

Формулы (\ref{gbwf}) и
(\ref{aibwf}) были
анонсированы
в \cite{ShT2001}, но
публикация
доказательства
долгое время
откладывалась
по ряду
причин. Лишь
в 2008 году
авторы
поместили
доказательство
упомянутых
формул в
электронный
архив \cite{formula}, а
несколько
месяцев
спустя
Моррис,
также в
электронном
препринте \cite{Mor},
нашёл другое
доказательство
формулы (\ref{gbwf}),
основанное
на
мультипликативной
эргодической
теории.

Далее мы
рассматриваем
перспективы
получения
"оптимальной"
формулы для
вычисления
ССР.
Оказалось,
что это
можно
сделать в
рамках
теории
радикалов.
Мы покажем,
что
существует
наибольший
из
радикалов,
которыми
можно
заменить $\mathcal{R}%
_{\mathrm{hc}}$ в (\ref{aibwf}); мы
обозначаем
его $\mathcal{R}_{\mathrm{bw}}$. Мы
строим также
новый
радикал $\mathcal{R}_{\mathrm{cq}%
}^{a}$,
мажорируемый
$\mathcal{R}_{\mathrm{bw}}$. Как
следствие,
сочетание
радикала $\mathcal{R}%
_{\mathrm{cq}}^{a}$ с
гипокомпактным
радикалом
позволяет
получить
более
сильную
алгебраическую
БВ-формулу,
чем (\ref{aibwf}). В
конце работы
мы указываем
приложение
полученных
формул к
вопросу о
непрерывности
ССР.

\section{Некоторые
вспомогательные
результаты}

Алгебры,
рассматриваемые
в работе,
предполагаются
комплексными,
ассоциативными
и
нормированными.
Обозначим
через $A^{1}$
алгебру,
получаемую
из алгебры $A$
присоединением
единицы; при
этом
полагаем $A^{1}=A$,
если $A$
унитальна.
Термин \textit{
идеал}
означает
двусторонний
идеал,
\textit{оператор } ---
ограниченный
линейный
оператор в
комплексном
нормированном
пространстве,
\textit{фактор}
алгебры ---
фактор-алгебра
алгебры по
её
замкнутому
идеалу. Если $J$
--- замкнутый
идеал в $A$, то $q_{J}$ ---
это
стандартный
эпиморфизм $A$
на $A/J$; вместо $q_{J}(a)$
удобнее
иногда
писать $a/J$.
Точно так же,
образ
множества $M\subset A$
относительно
$q_{J}$
обозначается
либо $q_{J}(M)$, либо,
более
коротко, $M/J$.

Замкнутая
линейная
оболочка
множества $M$ в
нормированном
пространстве
$\mathcal{Y}$
обозначается
через $\operatorname*{span}M$
или, если
требуется
уточнение,
через $\operatorname*{span}_{\mathcal{Y}}M$,
замыкание ---
через $\overline{M}$ или
$\overline{M}^{\mathcal{Y}}$,
замкнутый
единичный
шар в $\mathcal{Y}$ ---
через $\mathcal{Y}_{\odot}$.

Говоря об
основных
конструкциях
работы,
отметим иные
характеризации
BW-радиуса $r(M)$
ограниченного
подмножества
$M $ алгебры $A$.
Поскольку верно $\rho\left(
a^{n}\right)  =\rho\left(  a\right)  ^{n}$, ясно,
что $r_{n}\left(  M\right)  =r_{1}\left(  M^{n}\right)
^{1/n}\leq r_{nm}\left(  M\right)  $ для
любых $n,m\in\mathbb{N}$, где
$r_{n}(M)$
определено в
(\ref{nach2}), откуда
\begin{equation}
r(M)=\sup_{n}r_{n}\left(  M\right)  =\sup_{n}r_{nm}\left(  M\right)
=\lim_{k\rightarrow\infty}r_{\mathbf{n}\left(  k\right)  }\left(  M\right)
\label{nach1}%
\end{equation}
для
некоторой
фиксированной
последовательности
$\mathbf{n}\left(  k\right)  $:
например,
можно взять
$\mathbf{n}\left(  k\right)  =\left(  p_{1}\cdots p_{k}\right)  ^{k}$,
где $p_{i}$
пробегает
ряд всех
простых чисел.

Переход от
алгебраической
ситуации к
операторной
удобно
осуществлять
через
рассмотрение
операторов
умножения.
Как обычно,
символами $\mathrm{L}%
_{a}$ и $\mathrm{R}_{a}$ мы
обозначаем
операторы
левого и
правого
умножения на
элемент $a$ в
алгебре $A$: $\mathrm{L}_{a}x=ax$,
$\mathrm{R}_{a}x=xa.$ Далее,
для $M\subset A$
полагаем $\mathrm{L}%
_{M}=\{\mathrm{L}_{a}:a\in M\}$ и $\mathrm{R}_{M}=\{\mathrm{R}_{a}:a\in
M\}.$ Нетрудно
проверить,
что если $M$
ограничено,
то $r(\mathrm{L}_{M})=r(\mathrm{R}_{M})=r(M)$ и
$\rho(\mathrm{L}_{M})=\rho(\mathrm{R}_{M})=\rho(M)$.
Важно, что $r(M)$ и
$\rho(M)$ находят
отражение в
свойствах
семейства
операторов
$\mathrm{L}_{M}\mathrm{R}_{M}=\{\mathrm{L}_{a}\mathrm{R}_{b}:a,b\in M\}.$

\begin{lemma}
\label{pass}Пусть $M$ ---
ограниченное
подмножество
нормированной
алгебры $A$.
Тогда $\rho(M^{m})=\rho(M)^{m}$ и
$r(M^{m})=r(M)^{m}$ для
любого $m\in\mathbb{N}$, $\rho(M)^{2}%
=\rho(\mathrm{L}_{M}\mathrm{R}_{M})$ и $r(M)^{2}=r(\mathrm{L}%
_{M}\mathrm{R}_{M})$.
\end{lemma}

\begin{proof}
Учитывая (\ref{nach}),
(\ref{nach1}) и то, что $\left(
M^{m}\right)  ^{n}=M^{mn}$ для
любого $n\in\mathbb{N}$,
получим
\begin{align*}
\rho(M^{m}) &  =\lim_{n\rightarrow\infty}\left\Vert M^{mn}\right\Vert
^{1/n}=\lim_{n\rightarrow\infty}\left(  \left\Vert M^{mn}\right\Vert
^{1/mn}\right)  ^{m}=\rho(M)^{m},\\
r\left(  M^{m}\right)   &  =\sup_{n}r_{n}\left(  M^{m}\right)  =\sup_{n}%
r_{1}\left(  M^{mn}\right)  ^{1/n}=\sup_{n}r_{nm}\left(  M\right)
^{m}=r\left(  M\right)  ^{m}.
\end{align*}

Так как $\Vert\left(  \mathrm{L}%
_{M}\mathrm{R}_{N}\right)  ^{n}\Vert\leq\Vert M^{n}\Vert\Vert N^{n}\Vert$,
то $\rho(\mathrm{L}_{M}\mathrm{R}_{M})\leq\rho(M)^{2}$.
Чтобы
доказать
обратное
неравенство,
заметим, что
$\Vert M^{3}\Vert\leq\Vert\mathrm{L}_{M}\mathrm{R}_{M}\Vert\Vert M\Vert.$
Меняя здесь $M$
на $M^{n}$,
извлекая
корни и
переходя к
пределу в
обеих частях
неравенства,
получим $\rho(M)^{3}=\rho(M^{3}%
)\leq\rho(\mathrm{L}_{M}\mathrm{R}_{M})\rho(M).$

Так как $\mathrm{L}_{M}$
коммутирует
с $\mathrm{R}_{M}$, то $\rho(\mathrm{L}_{a}\mathrm{R}%
_{b})\leq\rho(\mathrm{L}_{a})\rho(\mathrm{R}_{b})\leq\rho(a)\rho(b)$
для любых $a,b\in M^{n}$.
Учитывая (\ref{nach2}),
беря точные
верхние
грани,
извлекая
корни и
переходя к
верхнему
пределу,
получим $r(\mathrm{L}_{M}%
\mathrm{R}_{M})\leq r(M)^{2}.$

Так как $\rho(a)^{2}=\rho(\mathrm{L}%
_{a}\mathrm{R}_{a})$, то $\sup_{a\in M^{n}}\rho(a)^{2}\leq
\sup_{b,c\in M^{n}}\rho(\mathrm{L}_{b}\mathrm{R}_{c}).$
Снова
извлекая
корни и
переходя к
верхнему
пределу,
получим $r(M)^{2}\leq r(\mathrm{L}%
_{M}\mathrm{R}_{M}).$
\end{proof}

Запишем
определение
существенного
ССР $\rho_{e}$ в более
подробной
форме:
\[
\rho_{e}(M):=\underset{n\rightarrow\infty}{\lim}\left(  \sup\left\{
\left\Vert T\right\Vert _{e}:T\in M^{n}\right\}  \right)  ^{1/n}=\underset
{n}{\inf}\left(  \sup\left\{  \left\Vert T\right\Vert _{e}:T\in M^{n}\right\}
\right)  ^{1/n},
\]
где $||T||_{e}=||\pi(T)||$ ---
существенная
норма $T$.
Характеристикой
оператора,
близкой к $||T||_{e}$,
является
хаусдорфова
мера
некомпактности
(\textit{хаусдорфова
норма}) $\left\Vert T\right\Vert _{\chi\text{ }%
}$ образа
единичного
шара $\mathcal{X}_{\odot}$ при
отображении
$T$ (напомним,
что это ---
точная
нижняя грань
множества
тех $\varepsilon>0$, для
которых $T\mathcal{X}_{\odot}$
имеет
конечную $\varepsilon
$-сеть, и что $\Vert
T\Vert_{\chi}\leq||T||_{e}$).
Преимущество
полунормы $\left\Vert
T\right\Vert _{\chi\text{ }}$
объясняется
тем, что она
хорошо
согласована
с сужением
оператора на
инвариантное
подпространство
и с его
индуцированным
действием в
фактор-пространстве
(см. например
\cite[лемма 2.5]{ShT2002}):
\begin{equation}
\Vert T|\mathcal{Y}\Vert_{\chi}\leq2\Vert T\Vert_{\chi}\text{ и
}\left\Vert T|\left(  \mathcal{X}/\mathcal{Y}\right)  \right\Vert _{\chi}%
\leq\left\Vert T\right\Vert _{\chi}. \label{restQuot}%
\end{equation}
Важный
технический
результат
был получен
в \cite[следствие
6.5]{ShT2000}:

\begin{lemma}
\label{ineq} $\Vert\mathrm{L}_{M}\mathrm{R}_{M}\Vert_{\chi}\leq16\Vert
M\Vert_{\chi}\Vert M\Vert$ для
любого
предкомпактного
множества $M$ операторов.
\end{lemma}

Для
соответствующего
\textit{хаусдорфова
}ССР $\rho_{\chi}$,
определенного
равенством
\[
\rho_{\chi}(M):=\underset{n\rightarrow\infty}{\lim}\left(  \sup\left\{
\left\Vert T\right\Vert _{\chi}:T\in M^{n}\right\}  \right)  ^{1/n}=\inf
_{n}\left(  \sup\left\{  \left\Vert T\right\Vert _{\chi}:T\in M^{n}\right\}
\right)  ^{1/n},
\]
доказано \cite{Mor},
что он
совпадает с
$\rho_{e}$ на
предкомпактных
множествах
операторов.
Однако мы
будем
использовать
далее лишь
очевидное
неравенство
\begin{equation}
\rho_{\chi}(M)\leq\rho_{e}(M) \label{he}%
\end{equation}
для любого
предкомпактного
множества
операторов $M$.
Из него
следует, что
для
доказательства
(\ref{gbwf})
достаточно
установить
равенство
\begin{equation}
\rho\left(  M\right)  =\max\left\{  r\left(  M\right)  ,\rho_{\chi}\left(
M\right)  \right\}  . \label{ggbwf}%
\end{equation}
Важный
случай
выполнения
этого
равенства
доказан в
\cite[предложение
9.6]{ShT2000}:

\begin{lemma}
\label{special}Если $\rho\left(  M\right)  =1$
для
предкомпактного
множества $M$
операторов и
полугруппа,
порождённая
$M $,
ограничена,
то \emph{(\ref{ggbwf})}
выполнено
для $M$.
\end{lemma}

Для любого
ограниченного
множества $M$
элементов
нормированной
алгебры $A$
положим $\rho^{\chi}(M)=\rho_{\chi
}(\mathrm{L}_{M}\mathrm{R}_{M})^{1/2}$. Легко
видеть,
учитывая (\ref{restQuot}),
что для
любого
замкнутого
идеала $J$
алгебры
\begin{equation}
\rho^{\chi}(M/J)\leq\rho^{\chi}(M). \label{MJM}%
\end{equation}

Элемент $a\in A$
называется
\textit{компактным}%
, если
оператор $\mathrm{W}%
_{a}:=\mathrm{L}_{a}\mathrm{R}_{a}$
компактен в $A$.
Будем
говорить,
что
множество $M\subset
A$\textit{ состоит из
совместно
компактных
элементов }в
$A$\textit{,} если все
операторы из
$\mathrm{L}_{M}\mathrm{R}_{M}$
компактны в $A$.
Нам
понадобится
следующее
обобщение
основного
результата
работы \cite{Tur98}.

\begin{lemma}
\label{semig} Если $G$ ---
полугруппа
квазинильпотентных
элементов
нормированной
алгебры $A$ и $G$
состоит из
совместно
компактных
элементов в $A$,
то $\operatorname*{span}G$
состоит из
квазинильпотентных
элементов.
\end{lemma}

\begin{proof}
Ясно, что $\mathrm{L}_{G}%
\mathrm{R}_{G}$ ---
полугруппа
вольтерровых
операторов в
$A$. В силу
\cite[теорема 4]{Tur98},
$\operatorname*{span}\mathrm{L}_{G}\mathrm{R}_{G}$
также
состоит из
вольтерровых
операторов.
Так как $\mathrm{L}_{b}\mathrm{R}_{c}%
\in\operatorname*{span}\mathrm{L}_{G}\mathrm{R}_{G}$ для
любых $b,c\in\operatorname*{span}G$,
то $\mathrm{L}_{\operatorname*{span}G}\mathrm{R}%
_{\operatorname*{span}G}$ также
состоит из
квазинильпотентных
операторов,
и, значит, $\operatorname*{span}G$
состоит из
квазинильпотентных
элементов.
\end{proof}

\section{Топологические
радикалы}

\textit{Топологический
радикал} (ТР)
на классе
всех
нормированных
алгебр --- это
отображение
$P$,
сопоставляющее
каждой
алгебре $A$ её
замкнутый
идеал $P(A)$ и
удовлетворяющее
условиям:

\begin{itemize}
\item[$\left(  \mathrm{R1}\right)  $] $f(P(A))\subset P(B)$
для любого
непрерывного
открытого
эпиморфизма
$f:A\rightarrow B$.

\item[$\left(  \mathrm{R2}\right)  $] $P(A/P(A))=0$.

\item[$\left(  \mathrm{R3}\right)  $] $P(P(A))=P(A)$.

\item[$\left(  \mathrm{R4}\right)  $] Для
любого
идеала $J$
алгебры $A$, $P(J)$ ---
идеал
алгебры $A$,
содержащийся
в $P(A)$.
\end{itemize}

По сравнению
с
определением
ТР в \cite{Dix}, мы
требуем
открытости
эпиморфизма
в (R1) и не
требуем
замкнутости
идеала в (R4).
Алгебра
называется
$P$%
\textit{-радикальной}%
, если $A=P(A)$. Можно
показать (см.
\cite[следствие 2.8
и теорема 2.9(i)]{rad1}),
что
замыкания $P$%
-ра%
\-%
ди%
\-%
каль%
\-%
ных идеалов
и факторы $P$-ра%
\-%
ди%
\-%
каль%
\-%
ных алгебр
$P$-радикальны,
и что класс
всех $P$-ра%
\-%
ди%
\-%
каль%
\-%
ных алгебр
устойчив
относительно
расширений
(если $J$
замкнут, $A/J$ и $J$
$P$-радикальны,
то $A$ $P$%
-радикальна).
Более общий
результат
сформулирован
в лемме \ref{transext}.

Назовём
занумерованное
ординалами
семейство $(J_{\alpha
})_{\alpha\leq\gamma}$ идеалов
алгебры $A$,
содержащихся
и замкнутых
в некотором
идеале $J$
алгебры $A$,
\textit{возрастающей
трансфинитной
цепочкой }%
идеалов,
если $J_{\alpha}\subset J_{\beta}$
при $\alpha<\beta$ и $J_{\beta}=\overline{\cup
_{\alpha<\beta}J_{\alpha}}^{J}$ для
любого
предельного
ординала $\beta\leq\gamma$,
и
\textit{убывающей
трансфинитной
цепочкой, }%
если $J_{\beta}\subset J_{\alpha}$
при $\alpha<\beta$ и $J_{\beta}=\cap_{\alpha<\beta
}J_{\alpha}$ для любого
предельного
ординала $\beta\leq\gamma$.
Следующий
результат
выводится из
\cite[теоремы 2.9(i) и
2.10(i)]{rad1}.

\begin{lemma}
\label{transext} Пусть $P$ ---
топологический
радикал.
Если в
возрастающей
трансфинитной
цепочке $(J_{\alpha})_{\alpha
\leq\gamma}$ замкнутых
идеалов
нормированной
алгебры $A$
первый идеал
$J_{0}$ и все
факторы $J_{\alpha+1}/J_{\alpha}$
$P$-радикальны,
то и её
последний
элемент $J_{\gamma}$ $P$-радикален.
\end{lemma}

Мы, в
основном,
будем иметь
дело со
специальными
классами
радикалов,
обладающих
особенно
удобными
свойствами.
Топологический
радикал $P$
называется
\textit{наследственным}
(сокращенно,
НТР), если
выполняется условие

\begin{itemize}
\item[$\left(  \mathrm{R5}\right)  $] $P(J)=J\cap P(A)$ для
любого
идеала $J$
алгебры $A$.
\end{itemize}

%

\noindent
и
\textit{равномерным}
(см. \cite{rad1}), если
все
подалгебры
произвольной
$P$%
-радикальной
алгебры $P$%
-радикальны.
Легко
видеть, что
равномерные
радикалы
наследственны,
и что (R5) влечет
(R3) и (R4). Поэтому
НТР можно
определять
условиями (R1), (R2)
и (R5), чем мы
будем
пользоваться
в дальнейшем.

Имеет смысл
также
рассматривать
ТР в классе
всех
банаховых
алгебр,
сужая
действие
вышеперечисленных
аксиом. Так,
при этом, в (R4)
под идеалом
понимается
замкнутый
идеал, а
требование
открытости
эпиморфизма
в (R1) излишне (то
есть,
выполняется
автоматически
в силу
теоремы Банаха).

Если $P$ --- НТР в
классе
банаховых
алгебр (с,
возможно,
более
широкой
областью
определения),
то, согласно
\cite[теорема 2.21]{rad1},
отображение
$P^{r}$,
определённое
равенством
$P^{r}\left(  A\right)  :=A\cap P\left(  \widehat{A}\right)  $,
где $\widehat{A}$ ---
пополнение
нормированной
алгебры, --- HТР в
классе
нормированных
алгебр.
Поскольку $P^{r}\left(
A\right)  =P\left(  A\right)  $ для
банаховых
алгебр,
\textit{регулярная
процедура} $P\longmapsto
P^{r}$ расширяет
действие $P$ с
банаховых
алгебр на
нормированные
алгебры,
причём $P^{rr}=P^{r}$. ТР $P$
в классе
нормированных
алгебр
называется
\textit{регулярным
}(см. \cite{rad1}), если $P=P^{r}$.

Потребность
в
регуляризации
иллюстрируется
следующим
примером:
радикал
Джекобсона ---
НТР в классе
банаховых
алгебр
(обозначаем
его через $\operatorname*{Rad}%
$), но не
является ТР
в классе
нормированных
алгебр
\cite[пример 10.1]{Dix}.
Поэтому
вместо $\operatorname*{Rad}$
мы
рассматриваем
\textit{регулярный
радикал
Джекобсона
}$\operatorname*{Rad}^{r}$. Ясно,
что он
равномерен.
Другие
примеры ТР
мы
рассмотрим ниже.

В классе
всех
(топологических)
радикалов
введем
частичный
порядок,
полагая $P_{1}\leq P_{2}$,
если $P_{1}(A)\subset P_{2}(A)$ для
любой
нормированной
алгебры.

\begin{lemma}
\label{sup} \emph{a)} Любое
семейство
радикалов $\{P_{i}%
:i\in\Lambda\}$ имеет
точную
верхнюю
грань $\vee_{i}P_{i}$ и
точную
нижнюю грань
$\wedge_{i}P_{i}$ в классе
всех ТР.

\emph{b)} Пусть $P=\vee\{P_{i}:i\in\Lambda\}$.
В любой
алгебре $A$
существует
возрастающая
трансфинитная
цепочка
замкнутых
идеалов $(J_{\alpha})_{\alpha
\leq\gamma}$, такая что
$J_{0}=0$, $J_{\gamma}=P(A)$ и
каждый
фактор $J_{\alpha+1}/J_{\alpha}$
является $P_{i}$%
-радикальным
для
некоторого $i$.

\emph{с) }Если все $P_{i}$
наследственны,
то радикал
$\wedge\{P_{i}:i\in\Lambda\}$
является
наследственным
и
сопоставляет
каждой
алгебре $A$
идеал $\cap_{i}P_{i}\left(  A\right)  $.
\end{lemma}

\begin{proof}
a) Положим $\mathrm{H}\left(
A\right)  =\overline{\sum_{i}P_{i}\left(  A\right)  }$ и $\mathrm{B}%
\left(  A\right)  =\cap_{i}P_{i}\left(  A\right)  $ (это
корректно,
так как
можно
считать $\Lambda$,
если
необходимо,
\textit{множеством}
классов
эквивалентности
совпадающих
на алгебре
радикалов).
Отображения
$\mathrm{B}$ и $\mathrm{H}$
являются
\textit{верхним} и
\textit{нижним} \textit{ТР
}(см. \cite{Dix}), то есть
удовлетворяют
всем
аксиомам ТР,
кроме, быть
может, (R3) и (R2) соответственно.

Действительно,
ясно, что
условия (R1) и (R4)
выполнены
для $\mathrm{B}$ и $\mathrm{H}$. Так
как $P_{i}\left(  A\right)  $ ---
идеал в $\mathrm{H}\left(  A\right)  $,
то
\[
P_{i}\left(  A\right)  =P_{i}\left(  P_{i}\left(  A\right)  \right)  \subset
P_{i}\left(  \mathrm{H}\left(  A\right)  \right)  \subset\mathrm{H}\left(
\mathrm{H}\left(  A\right)  \right)  ,
\]
откуда $\mathrm{H}\left(  A\right)
\subset\mathrm{H}\left(  \mathrm{H}\left(  A\right)  \right)  $,
то есть, (R3)
выполнено
для $\mathrm{H}$.
Покажем
теперь, что
условие (R2)
выполнено
для $\mathrm{B}$. Пусть
$I=\mathrm{B}(A)$ и $J=q_{I}^{-1}(\mathrm{B}(A/I))$.
Так как $I\subset P_{i}(A)$
для любого $i$,
то задан
эпиморфизм
$p_{i}:A/I\rightarrow A/P_{i}(A)$,
который
является
непрерывным
и открытым.
Тогда
\[
\left(  p_{i}\circ q_{I}\right)  (J)=p_{i}\left(  \mathrm{B}(A/I)\right)
\subset p_{i}\left(  P_{i}(A/I)\right)  \subset P_{i}(A/P_{i}(A))=0.
\]
Следовательно,
$J\subset P_{i}(A)$ для
любого $i$,
откуда $J\subset I$ и $\mathrm{B}%
(A/I)=0$.

Свяжем с $\mathrm{H}$ и
$\mathrm{B}$
трансфинитные
цепочки
замкнутых
идеалов
алгебры $A$,
возрастающую
$\left(  \mathrm{H}^{\left(  \alpha\right)  }\left(  A\right)  \right)  $
и убывающую
$\left(  \mathrm{B}^{\alpha}\left(  A\right)  \right)  $,
определенные
реккурентно:
$\mathrm{H}^{\left(  \alpha+1\right)  }\left(  A\right)  $ ---
прообраз $\mathrm{H}\left(
A/\mathrm{H}^{\left(  \alpha\right)  }\left(  A\right)  \right)  $ в $A$
и $\mathrm{B}^{\alpha+1}\left(  A\right)  =\mathrm{B}\left(
\mathrm{B}^{\alpha}\left(  A\right)  \right)  $ при
начальных
условиях $\mathrm{H}^{\left(
0\right)  }=0$ и $\mathrm{B}^{0}=A$.
Отображения
$\mathrm{B}^{\alpha}$ и $\mathrm{H}^{\left(  \alpha\right)  }$ ---
соответственно
верхний и
нижний ТР. По
существу,
это
установлено
в \cite[теоремы 6.6 и
6.10]{Dix}: аргументы
\cite{Dix}
непосредственно
переносятся
с банаховых
алгебр на
нормированные,
и только
один момент
требует
пояснений ---
это условие (R4)
в случае
незамкнутого
идеала $J$. Для
$\mathrm{B}^{\alpha}$ оно
легко
доказывается
индукцией;
проверим его
для $\mathrm{H}^{\left(  \alpha\right)  }$.
Если по
предположению
индукции $I=\mathrm{H}%
^{\left(  \alpha\right)  }\left(  J\right)  $ ---
идеал в $A$,
содержащийся
в $K=\mathrm{H}^{\left(  \alpha\right)  }\left(  \overline{J}\right)  $,
то
отображение
$x/I\longmapsto q_{\overline{I}}\left(  x\right)  $
осуществляет
изоморфизм
между $J/I$ и
идеалом $q_{\overline{I}}\left(
J\right)  $ алгебры $A/\overline{I}%
$. Тогда
\begin{equation}
\mathrm{H}^{\left(  \alpha+1\right)  }\left(  J\right)  =\left\{  x\in
J:q_{\overline{I}}\left(  x\right)  \in\mathrm{H}\left(  q_{\overline{I}%
}\left(  J\right)  \right)  \right\}  \label{local}%
\end{equation}
--- идеал в $A$. Так
как $q_{\overline{I}}\left(  J\right)  $ ---
идеал в $\overline{J}/\overline{I}$,
то $\mathrm{H}\left(  q_{\overline{I}}\left(  J\right)  \right)
\subset\mathrm{H}\left(  \overline{J}/\overline{I}\right)  $.
Так как $p\left(  \mathrm{H}\left(
\overline{J}/\overline{I}\right)  \right)  $ $\subset\mathrm{H}\left(
\overline{J}/K\right)  $ для
стандартного
эпиморфизма
$p:\overline{J}/\overline{I}\longrightarrow\overline{J}/K$, то
$\mathrm{H}\left(  \overline{J}/\overline{I}\right)  \subset q_{\overline{I}%
}\left(  \mathrm{H}^{\left(  \alpha+1\right)  }\left(  \overline{J}\right)
\right)  $ и тогда $\mathrm{H}^{\left(
\alpha+1\right)  }\left(  J\right)  \subset\mathrm{H}^{\left(  \alpha
+1\right)  }\left(  \overline{J}\right)  $ в
силу (\ref{local}); шаг
же индукции
для
предельного
ординала
ясен. Итак,
показано,
что $\mathrm{H}^{\left(  \alpha\right)  }\left(  J\right)  $
--- идеал в $A$ и $\mathrm{H}%
^{\left(  \alpha\right)  }\left(  J\right)  \subset\mathrm{H}^{\left(
\alpha\right)  }\left(  \overline{J}\right)  $ для
любого
ординала $\alpha$.
Учитывая
включение $\mathrm{H}%
^{\left(  \alpha\right)  }\left(  \overline{J}\right)  \subset\mathrm{H}%
^{\left(  \alpha\right)  }\left(  A\right)  $,
доказанное в
\cite{Dix}, получим,
что $\mathrm{H}^{\left(  \alpha\right)  }$
удовлетворяет
(R4).

В силу
монотонности,
цепочки $\left(  \mathrm{B}^{\alpha
}\left(  A\right)  \right)  $ и $\left(  \mathrm{H}^{\left(
\alpha\right)  }\left(  A\right)  \right)  $
стабилизируются:
найдутся
такие
ординалы $\beta$ и
$\gamma$, что $\mathrm{H}^{\left(  \beta+1\right)  }\left(
A\right)  =\mathrm{H}^{\left(  \beta\right)  }\left(  A\right)  $ и
$\mathrm{B}^{\gamma+1}\left(  A\right)  =\mathrm{B}^{\gamma}\left(  A\right)
$. Найденные
при этом
идеалы
обозначим
через $\mathrm{H}^{\prime}\left(  A\right)  $
и $\mathrm{B}^{\prime}\left(  A\right)  $. Из
условий
стабилизации
легко
следует
согласно
\cite[теоремы 6.6 и 6.10]{Dix}
(оговорка,
сделанная
выше, здесь
уже не нужна),
что
соответствующие
отображения
$\mathrm{B}^{\prime}$ и $\mathrm{H}^{\prime}$
удовлетворяют
(R3) и (R2), то есть
становятся
радикалами.
Ясно, что $\mathrm{H}^{\prime}$
и $\mathrm{B}^{\prime}$ ---
верхняя и
нижняя грани
семейства $\{P_{i}%
:i\in\Lambda\}$. Если $T$ --- ТР,
являющийся
верхней
гранью $\{P_{i}:i\in\Lambda\}$,
то по
индукции
получим, что
$\mathrm{H}^{\left(  \alpha\right)  }\leq T^{\left(  \alpha\right)  }=T$
для любого $\alpha$,
откуда $\mathrm{H}^{\prime}\leq T$.
Значит, $\mathrm{H}^{\prime}$ ---
точная
верхняя
грань $\{P_{i}:i\in\Lambda\}$.
Аналогично
можно
показать,
что $\mathrm{B}^{\prime}$ --
точная
нижняя грань
$\{P_{i}:i\in\Lambda\}$ (см. также
\cite[следствие
6.12]{Dix}).

b) Допустим,
рассуждая по
индукции,
что мы уже
построили
требуемый
идеал $J_{\alpha}$.
Поскольку $J_{\alpha
}\subset P\left(  A \right)  $, можно
проверить,
что $P\left(  A\right)  /J_{\alpha}=P\left(  A/J_{\alpha
}\right)  $ (см.
например
\cite[доказательство
следствия 2.8
(ii)]{rad1}). Если $J_{\alpha}\neq P\left(  A
\right)  $, то $\mathrm{H}\left(  A/J_{\alpha}\right)  \neq0$,
что даёт
возможность
построить $J_{\alpha+1}$.

c) Поскольку
$\mathrm{B}$
удовлетворяет
условиям (R1) и (R2),
достаточно
доказать (R5)
для $\mathrm{B}$. Но это
очевидно в
силу того,
что (R5)
выполнено
для всех $P_{i}$.
\end{proof}

\subsection{Компактно
квазинильпотентный
радикал.}

Радикал $\mathcal{R}_{\mathrm{cq}}$,
который мы
сейчас
определим,
связан с ССР.
Пусть $\mathfrak{K}(A)$ ---
множество
всех
предкомпактных
подмножеств
нормированной
алгебры $A$.
Назовём $A$
\textit{компактно
квазинильпотентной,}
если ${\rho}(M)=0$ для
любого $M\in\mathfrak{K}(A)$.
Следующий
результат
доказан в
\cite[теоремы 4.18, 4.20 и
4.25, лемма 4.11]{rad1}.

\begin{theorem}
\label{cq-rad}

\emph{(a)} В любой
нормированной
алгебре $A$
есть
наибольший
компактно
квазинильпотентный
идеал $\mathcal{R}_{\mathrm{cq}}(A)$.
Отображение
$A\longmapsto\mathcal{R}_{\mathrm{cq}}(A)$ ---
наследственный
топологический
радикал.

\emph{(b)} Элемент $a\in A$
принадлежит
$\mathcal{R}_{\mathrm{cq}}(A)$ тогда и
только
тогда, когда
$\rho(aM)=0$ для любого
$M\in\mathfrak{K}(A)$.
Эквивалентное
условие: $\rho(\{a\}\cup M)=\rho(M)$
для любого
$M\in\mathfrak{K}(A)$.

\emph{(c)} $\rho(M)=\rho(M/\mathcal{R}_{\mathrm{cq}}(A))$
для всех $M\in\mathfrak{K}(A)$.

\emph{(d)} $\mathcal{R}_{\mathrm{cq}}\left(  B\right)  =B\cap\mathcal{R}%
_{\mathrm{cq}}\left(  A\right)  $ для
любой
плотной
подалгебры $B$.
\end{theorem}

Условие (d)
эквивалентно
регулярности
$\mathcal{R}_{\mathrm{cq}}$. Ясно,
что $\mathcal{R}_{\mathrm{cq}}$ равномерен.

\subsection{Гипокомпактный
радикал.}

Обозначим
через $\mathcal{C}(A)$
множество
всех
компактных
элементов
нормированной
алгебры $A$. Это
--- замкнутый
полугрупповой
идеал в $A$, так
что $\operatorname{span}\mathcal{C}(A)$ ---
идеал в $A$.

\begin{lemma}
\label{lhc1} Если $f:A\longrightarrow B$ ---
непрерывный
открытый
эпиморфизм
нормированных
алгебр, то $f(\mathcal{C}%
(A))\subset\mathcal{C}(B)$.
\end{lemma}

\begin{proof}
Пусть $a \in\mathcal{C}(A)$.
Так как $f\mathrm{W}_{a}=\mathrm{W}%
_{f\left(  a\right)  }f$ и $f\left(  A_{\odot}\right)  $
содержит
открытый шар
пространства
$B$, то $\mathrm{W}_{f\left(  a\right)  }$
компактен в $B$.
\end{proof}

\begin{lemma}
\label{lhc2} Пусть $J$ ---
идеал в $A$. Если
$\mathcal{C}(J)\neq0$, то $J\cap\mathcal{C}(A)\neq0$.
\end{lemma}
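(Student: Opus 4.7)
The plan is to fix a nonzero $a\in\mathcal{C}(J)$ and show that either $axa$ for some $x\in A$, or (in the degenerate case when all such products vanish) $a$ itself, supplies the required nonzero element of $J\cap\mathcal{C}(A)$.

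The technical core is the following factorization. For $x\in A$ set $b=axa$; since $J$ is an ideal and $a\in J$, we have $b\in J$. A direct calculation gives
\begin{equation*}
\mathrm{W}_{b}(y)=(axa)y(axa)=a\bigl(x(aya)x\bigr)a,
\end{equation*}
which we read as $\mathrm{W}_{b}=\mathrm{W}_{a}|_{J}\circ\mathrm{L}_{x}\mathrm{R}_{x}\circ\mathrm{W}_{a}$ understood as a composition $A\to J\to J\to J\hookrightarrow A$: the first $\mathrm{W}_{a}$ sends $A$ into $J$ because $a\in J$, the operator $\mathrm{L}_{x}\mathrm{R}_{x}$ preserves $J$ because $J$ is an ideal, and the last step is $\mathrm{W}_{a}|_{J}$, which is compact by hypothesis. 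A composition of bounded operators with a compact one being compact, $\mathrm{W}_{b}$ is compact on $A$, i.e.\ $b\in\mathcal{C}(A)$.

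Two cases finish the proof. If some $x\in A$ satisfies $axa\neq 0$, then $b=axa$ is a nonzero element of $J\cap\mathcal{C}(A)$. Otherwise $aAa=0$, hence $\mathrm{W}_{a}$ is identically zero on $A$ and therefore trivially compact, and $a$ itself lies in $J\cap\mathcal{C}(A)\setminus\{0\}$. The only real content is the factorization displayed above, whose verification is a three-line bookkeeping exercise in $\mathrm{L}$'s and $\mathrm{R}$'s; once it is in hand, no further obstacle arises, and in particular there is no need to know anything about the additive or ideal structure of $\mathcal{C}(J)$ inside $A$.
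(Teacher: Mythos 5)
Your proof is correct and follows essentially the same route as the paper: the paper factors $\mathrm{W}_{ba}=\mathrm{L}_{b}\mathrm{W}_{a}\mathrm{R}_{b}$ with $b\in J$ to show $J\mathcal{C}(J)\subset J\cap\mathcal{C}(A)$, while you factor $\mathrm{W}_{axa}=\mathrm{W}_{a}|_{J}\circ\mathrm{L}_{x}\mathrm{R}_{x}\circ\mathrm{W}_{a}$ with $x\in A$ --- in both cases one bounded factor lands in $J$, where the compact $\mathrm{W}_{a}|_{J}$ is then applied. The degenerate cases ($aAa=0$ for you, $J\mathcal{C}(J)=0$ in the paper) are resolved identically, by observing that $\mathrm{W}_{a}$ is then the zero operator on $A$.
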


\begin{proof}
Легко
видеть, что
\begin{equation}
\mathrm{W}_{ba}=\mathrm{L}_{b}\mathrm{W}_{a}\mathrm{R}_{b}=\mathrm{R}%
_{a}\mathrm{W}_{b}\mathrm{L}_{a} \label{prod}%
\end{equation}
для всех $a,b\in A$.
Если $a\in\mathcal{C}(J)$, то
при любом $b\in J$
оператор $\mathrm{W}_{ba}$
компактен в $A$.
Поэтому $J\mathcal{C}(J)\subset
J\cap\mathcal{C}(A)$ и всё
доказано,
если $J\mathcal{C}(J)\neq0$. С
другой
стороны,
если $J\mathcal{C}(J)=0$, то
$\mathcal{C}(J)\subset\mathcal{C}(A)$,
поскольку $\mathrm{W}%
_{a}(x)=\left(  ax\right)  a=0$ для
всех $a\in\mathcal{C}(J)$ и $x\in A$.
\end{proof}

Следующий
результат
легко
следует из (\ref{prod}).

\begin{lemma}
\label{lhc3}Замкнутый
идеал,
порожденный
компактным
элементом
нормированной
алгебры $A$,
состоит из
совместно
компактных
элементов в $A$.
\end{lemma}

Нормированная
алгебра $A$
называется
\textit{гипокомпактной,}
если любой
её ненулевой
фактор $A/J$
содержит
ненулевой
компактный
элемент, и
\textit{бикомпактной}%
, если $A$
состоит из
совместно
компактных
элементов в $A$.
Эти понятия
распространим
на идеалы и
подалгебры,
рассматривая
их как
нормированные
алгебры. Из
определения
вытекает,
что
замыкания
гипокомпактных
подалгебр и
факторы
гипокомпактных
алгебр гипокомпактны.

\begin{lemma}
\label{iha}Любой, даже
односторонний,
ненулевой
идеал
гипокомпактной
алгебры
содержит
ненулевой
компактный
элемент этой алгебры.
\end{lemma}

\begin{proof}
Пусть $A $
гипокомпактна
и $J$ --- её
ненулевой
левый идеал.
Пусть $I=\{a\in A:aJ=0\}$.
Тогда $I$ ---
замкнутый
идеал в $A$. Если
$I=A$, то всякий
элемент в $J$ ---
компактный
элемент в $A $.
Иначе $A/I$ имеет
ненулевой
компактный
элемент $b$.
Выберем $a\in A$,
такой что $q_{I}\left(
a\right)  =b$. Очевидно,
найдется $x\in J$,
такой что $ax\neq0$.
Докажем, что
элемент $ax$
компактен в $A$.

Действительно,
$\mathrm{W}_{ax}=\mathrm{R}_{x}\mathrm{W}_{a}\mathrm{L}_{x}$.
Так как $\mathrm{R}_{x}(I)=0$,
определим
оператор $V:A/I\longrightarrow
A$, полагая $V\left(  q_{I}\left(
y\right)  \right)  =yx$ для
любого $y \in A$.
Тогда
нетрудно
видеть, что $\left\Vert
V\right\Vert \leq\left\Vert \mathrm{R}_{x}\right\Vert $ и $\mathrm{R}%
_{x}=Vq_{I}$. Поскольку
оператор $q_{I}\mathrm{W}%
_{a}=\mathrm{W}_{b}q_{I}$
компактен,
оператор $\mathrm{W}%
_{ax}=Vq_{I}\mathrm{W}_{a}\mathrm{L}_{x}$
компактен в $A$.
\end{proof}

Ясно, что
всякая
бикомпактная
алгебра
гипокомпактна.
Мы установим
в следующем
результате,
что все
гипокомпактные
алгебры
могут быть
получены
последовательным
расширением бикомпактных.

\begin{proposition}
\label{phc1} Пусть $J$ ---
идеал в
нормированной
алгебре $A$.
Следующие
условия эквивалентны:

\begin{itemize}
\item[(i)] $J$ гипокомпактен.

\item[(ii)] Для
всякого
непрерывного
открытого
эпиморфизма
$f:A\longrightarrow B$ либо $f(J)=0$,
либо $f(J)\cap\mathcal{C}(B)\neq0$.

\item[(iii)]
Существует
возрастающая
трансфинитная
цепочка $(J_{\alpha})_{\alpha
\leq\gamma}$ идеалов в $A$,
такая что $J_{0}=0$,
$J_{\gamma}=J$, все $J_{\alpha}$
замкнуты в $J$ и
все факторы
$J_{\alpha+1}/J_{\alpha}$ бикомпактны.
\end{itemize}
\end{proposition}

\begin{proof}
(i)$\Longrightarrow$(ii) Пусть $I=\ker f$
и $K=I\cap\overline{J}$. Тогда
алгебра $\overline{J}$ и
её фактор $\overline{J}/K$ ---
гипокомпактные
алгебры, в то
время как $q_{K}\left(
J\right)  $ ---
одновременно
идеал в $A/K$ и $\overline{J}/K$.
Предположим,
что $f(J)\neq0$. Тогда
$q_{K}\left(  J\right)  \neq0$ и $q_{K}\left(  J\right)  \cap
\mathcal{C}(\overline{J}/K)$ отлично
от нуля по
лемме \ref{iha}. Так
как это
множество
содержится в
$\mathcal{C}(q_{K}\left(  J\right)  )$, то
найдётся
ненулевой
элемент $a\in q_{K}\left(  J\right)
\cap\mathcal{C}(A/K)$ по лемме
\ref{lhc2}. Определим
отображение
$g:A/K\longrightarrow B$
условием $g\left(
q_{K}\left(  b\right)  \right)  =f\left(  b\right)  $ для
любого $b\in A$.
Ясно, что $g$ ---
непрерывный
открытый
эпиморфизм.
Выберем
такой
элемент $b\in J$,
что $a=q_{K}\left(  b\right)  $.
Тогда $f\left(  b\right)  =g\left(  a\right)  $
--- ненулевой
компактный
элемент
алгебры $B$ по
лемме \ref{lhc1}.

(ii)$\Longrightarrow$(iii)
Рассмотрим
все
возрастающие
трансфинитные
цепочки $(J_{\alpha})_{\alpha
\leq\beta}$ идеалов в $A$,
такие что $J_{\alpha}\subset
J$ и $J_{\alpha}$ замкнут
в $J$, $J_{\alpha+1}/J_{\alpha}$
бикомпактен
и отличен от
нуля для
любого $\alpha<\beta$.
Такие
цепочки
образуют
множество (а
не класс),
поскольку $A$ ---
множество.
Упорядочим
его
условием
\[
(J_{\alpha})_{\alpha\leq\beta_{1}}\prec(I_{\alpha})_{\alpha\leq\beta_{2}%
}\text{, если }\beta_{1}\leq\beta_{2}\text{ и
}J_{\alpha}=I_{\alpha}\text{ при }\alpha\leq\beta_{1}.
\]
По лемме
Цорна, в этом
множестве
есть
максимальный
элемент $(J_{\alpha})_{\alpha
\leq\gamma}$. Пусть $K=\overline{J_{\gamma}}$.
Если $J_{\gamma}\neq J$, то $K\neq
\overline{J}$, и фактор
$J/J_{\gamma}$
изометрически
изоморфен $q_{K}\left(
J\right)  $. Полагая $B=A/K$ и
$f=q_{K}$, из (ii)
выводим, что
$f(J)\cap\mathcal{C}(B)\neq0$. По
лемме \ref{lhc3},
существует
ненулевой
замкнутый
бикомпактный
идеал $I$ в $B$,
порождённый
элементом из
$f(J)$.

Положим $J_{\gamma+1}=\{x\in
J:f(x)\in I\}.$ Тогда
легко
видеть, что
$J_{\gamma+1}\neq J_{\gamma}$, $J_{\gamma+1}$ ---
идеал в $A $, $J_{\gamma+1}$
замкнут в $J$ и
$J_{\gamma+1}/J_{\gamma}$
бикомпактен.
Следовательно,
к цепочке
можно
добавить $J_{\gamma+1}$,
вопреки
предположению
максимальности.

(iii)$\Longrightarrow$(i) Пусть $I$ ---
собственный
замкнутый
идеал в $J$.
Возьмём
первый
ординал ${\alpha}$,
при котором
$J_{\alpha}$ не лежит в $I$.
Тогда $0\neq q_{I}(J_{\alpha})\subset
\mathcal{C}(J/I)$.
\end{proof}

\begin{corollary}
\label{3s}\textit{Пусть }$A$ \textit{ ---
нормированная
алгебра.
Следующие
условия
эквивалентны:}%

\begin{enumerate}
\item[(i)] $A$\textit{
гипокомпактна;}%

\item[(ii)] \textit{все
идеалы и
факторы
алгебры} $A$
\textit{гипокомпактны;}%

\item[(iii)] $J$\textit{ и }$A/J$\textit{
гипокомпактны
для
некоторого
замкнутого
идеала} $J$\textit{ в }$A$\textit{.}
\end{enumerate}
\end{corollary}

\begin{proof}
(i)$\Longrightarrow$(ii) Мы уже
знаем, что
фактор
гипокомпактной
алгебры
гипокомпактен.
Пусть теперь
алгебра $A$
гипокомпактна
и $J$ --- её идеал.
Пусть $f:A\longrightarrow B$ ---
непрерывный
открытый
эпиморфизм и
$I=\ker~f$. Считая, что
$f(J)\neq0$, получим,
что $q_{I}\left(  J\right)  \neq0$. По
лемме \ref{iha},$\mathcal{\ }$
найдется
такой $a\in J$, что $0\neq
q_{I}(a)\in\mathcal{C}(A/I).$ По
лемме \ref{lhc1}, $f(a)$ ---
ненулевой
компактный
элемент
алгебры $B$.
Согласно
предложению
\ref{phc1}, $J$ гипокомпактен.

(ii)$\Longrightarrow$(iii) очевидно.

(iii)$\Longrightarrow$(i) Пусть $I$ ---
собственный
замкнутый
идеал в $A$. Если
$J\subset I$, то $A/I$ можно
отождествить
с алгеброй
$(A/J)/(I/J)$,
являющейся
фактором
гипокомпактной
алгебры. По
определению,
она содержит
компактный элемент.

Пусть теперь
$I$ не содержит
$J$. Полагая $K=J\cap I$,
имеем $\mathcal{C}(J/K)\neq0.$ По
лемме \ref{lhc2}, $J/K\cap\mathcal{C}%
(A/K)\neq0.$ Пусть $0\neq q_{K}(a)\in
J/K\cap\mathcal{C}(A/K)$. Тогда $a\notin I$
и $q_{I}(a)\in\mathcal{C}(A/I)$ по
лемме \ref{lhc1}.
Таким
образом, $A/I$
содержит
ненулевые
компактные элементы.
\end{proof}

\begin{corollary}
\label{lhclar} В любой
нормированной
алгебре есть
наибольший
гипокомпактный
идеал.
\end{corollary}

\begin{proof}
Пусть $J$ ---
замкнутая
линейная
оболочка
объединения
всех
гипокомпактных
идеалов
алгебры $A$.
Докажем, что
идеал $J$ гипокомпактен.

Согласно
предложению
\ref{phc1},
достаточно
показать,
что если $f:A\longrightarrow B$ ---
непрерывный
открытый
эпиморфизм,
причем $f(J)\neq0$, то
$f(J)\cap\mathcal{C}(B)\neq0.$ Но если
$f(J)\cap\mathcal{C}(B)=0$, то $f(I)=0$ для
любого
гипокомпактного
идеала $I$
алгебры $A$.
Следовательно,
$f(J)=0$, противоречие.
\end{proof}

Обозначим
наибольший
гипокомпактный
идеал
алгебры $A$
через $\mathcal{R}_{\mathrm{hc}}(A)$.

\begin{lemma}
\label{lhcint} Если $J$ ---
идеал в $A$, то $\mathcal{R}%
_{\mathrm{hc}}(J)=J\cap\mathcal{R}_{\mathrm{hc}}(A)$.
\end{lemma}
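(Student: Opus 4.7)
The plan is to prove the two inclusions separately, exploiting the equivalent descriptions of hypercompactness in Proposition~\ref{phc1} together with the closure properties isolated in Corollary~\ref{3s}.

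For the easy inclusion $J\cap\mathcal{R}_{\mathrm{hc}}(A)\subset\mathcal{R}_{\mathrm{hc}}(J)$, I would set $K:=J\cap\mathcal{R}_{\mathrm{hc}}(A)$ and observe that $K$ is a closed ideal both of $\mathcal{R}_{\mathrm{hc}}(A)$ (since $J$ is an ideal of $A$) and of $J$. By Corollary~\ref{3s}(i)$\Rightarrow$(ii) every ideal of the hypercompact algebra $\mathcal{R}_{\mathrm{hc}}(A)$ is itself hypercompact, so $K$ is hypercompact as an algebra. Since hypercompactness is an intrinsic property of the algebra, $K$ is also a hypercompact ideal of $J$, and the maximality in the definition of $\mathcal{R}_{\mathrm{hc}}(J)$ yields the desired containment.

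For the reverse inclusion $\mathcal{R}_{\mathrm{hc}}(J)\subset\mathcal{R}_{\mathrm{hc}}(A)$ the route is by contradiction and requires the auxiliary identity $\mathcal{R}_{\mathrm{hc}}(A/\mathcal{R}_{\mathrm{hc}}(A))=0$. The identity is established first: any closed ideal $I$ of $A$ containing $\mathcal{R}_{\mathrm{hc}}(A)$ with $I/\mathcal{R}_{\mathrm{hc}}(A)$ hypercompact is itself hypercompact by the extension clause Corollary~\ref{3s}(iii)$\Rightarrow$(i), and maximality of $\mathcal{R}_{\mathrm{hc}}(A)$ then forces $I=\mathcal{R}_{\mathrm{hc}}(A)$, so the image quotient is zero. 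Now let $q:A\rightarrow A/\mathcal{R}_{\mathrm{hc}}(A)$ denote the canonical projection and assume that $q(\mathcal{R}_{\mathrm{hc}}(J))\neq 0$. Applying Proposition~\ref{phc1}(i)$\Rightarrow$(ii) to the hypercompact ideal $\mathcal{R}_{\mathrm{hc}}(J)$ of $J$ and to the continuous homomorphism $q|_J:J\rightarrow A/\mathcal{R}_{\mathrm{hc}}(A)$ produces a non-zero compact element of $A/\mathcal{R}_{\mathrm{hc}}(A)$; by Lemma~\ref{lhc3} the closed ideal it generates is a non-zero hypercompact ideal of $A/\mathcal{R}_{\mathrm{hc}}(A)$, contradicting the auxiliary identity.

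The only genuine obstacle is the auxiliary identity $\mathcal{R}_{\mathrm{hc}}(A/\mathcal{R}_{\mathrm{hc}}(A))=0$: it is short, but it critically depends on the extension stability of hypercompactness from Corollary~\ref{3s}(iii)$\Rightarrow$(i). Once this identity is in hand, both inclusions of the lemma reduce to a transparent application of the available equivalences, and no further transfinite bookkeeping is needed in the argument itself.
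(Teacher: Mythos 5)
Your first inclusion coincides with the paper's argument and is correct, and your auxiliary identity $\mathcal{R}_{\mathrm{hc}}(A/\mathcal{R}_{\mathrm{hc}}(A))=0$ is legitimately available (it is Lemma~\ref{lhccel}, whose proof does not use the present lemma). The gap is in the reverse inclusion, at the point where you invoke Proposition~\ref{phc1}(i)$\Rightarrow$(ii) for the ideal $\mathcal{R}_{\mathrm{hc}}(J)$ of $J$ and the map $q|_{J}\colon J\rightarrow A/\mathcal{R}_{\mathrm{hc}}(A)$. That implication is stated only for a continuous \emph{open epimorphism} of the ambient algebra, and $q|_{J}$ is neither open onto nor surjective onto $A/\mathcal{R}_{\mathrm{hc}}(A)$. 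What you may legitimately apply it to is the quotient map $J\rightarrow J/(J\cap\mathcal{R}_{\mathrm{hc}}(A))$, and then the non-zero compact element you obtain lies in $\mathcal{C}\left(J/(J\cap\mathcal{R}_{\mathrm{hc}}(A))\right)$, not in $\mathcal{C}(A/\mathcal{R}_{\mathrm{hc}}(A))$. Transferring it is not automatic: the image $q(J)$ is an ideal of $A/\mathcal{R}_{\mathrm{hc}}(A)$ that need not be closed, and the canonical bijection $J/(J\cap\mathcal{R}_{\mathrm{hc}}(A))\rightarrow q(J)$ is continuous but in general not bicontinuous, so compactness of $\mathrm{W}_{c}$ does not carry over for free; one has to redo the $\mathrm{W}_{ba}=\mathrm{L}_{b}\mathrm{W}_{a}\mathrm{R}_{b}$ argument of Lemma~\ref{lhc2} across this identification --- essentially the bookkeeping that the proof of Proposition~\ref{phc1}(i)$\Rightarrow$(ii) itself performs with $K=\ker f\cap\overline{J}$. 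As it stands, the step claiming a non-zero compact element of $A/\mathcal{R}_{\mathrm{hc}}(A)$ is unjustified.

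The underlying difficulty your route does not confront is that $\mathcal{R}_{\mathrm{hc}}(J)$ is a priori only an ideal of $J$, not of $A$ (an ideal of an ideal need not be an ideal of the whole algebra); that it is one is essentially the content of the lemma, so it cannot be assumed. The paper circumvents this by forming $I=\operatorname{span}_{J}\left(A^{1}\mathcal{R}_{\mathrm{hc}}(J)A^{1}\right)$, the closed ideal of $A$ generated by $\mathcal{R}_{\mathrm{hc}}(J)$ inside $J$, observing that $\operatorname{span}_{J}I^{3}\subset\mathcal{R}_{\mathrm{hc}}(J)$ is a hypercompact ideal while the quotient $I/\left(\operatorname{span}_{J}I^{3}\right)$ has zero multiplication operators and is therefore trivially hypercompact, and then concluding from the extension property of Corollary~\ref{3s} that $I$ is a hypercompact ideal of $A$, whence $\mathcal{R}_{\mathrm{hc}}(J)\subset I\subset\mathcal{R}_{\mathrm{hc}}(A)$. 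Either supply the missing transfer of compact elements along $q(J)$, or replace the second half of your argument by this construction.
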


\begin{proof}
По следствию
\ref{3s}, идеал $J\cap\mathcal{R}_{\mathrm{hc}%
}(A)$ алгебры $J$
гипокомпактен,
и потому
содержится в
$\mathcal{R}_{\mathrm{hc}}(J)$.
Требуется
доказать
обратное включение.

Пусть $I=\operatorname*{span}_{J}\left(
A^{1}\mathcal{R}_{\mathrm{hc}}(J)A^{1}\right)  $.
Ясно, что $I$ ---
идеал в $A$. Так
как
\begin{align*}
\operatorname{span}\nolimits_{J}I^{3}  &  \subset\operatorname{span}%
\nolimits_{J}\left(  \left(  A^{1}\mathcal{R}_{\mathrm{hc}}(J)A^{1}%
A^{1}\right)  \mathcal{R}_{\mathrm{hc}}(J)\left(  A^{1}A^{1}\mathcal{R}%
_{\mathrm{hc}}(J)A^{1}\right)  \right) \\
&  \subset\operatorname{span}\nolimits_{J}\left(  J\mathcal{R}_{\mathrm{hc}%
}(J)J\right)  \subset\mathcal{R}_{\mathrm{hc}}(J),
\end{align*}
то $\operatorname*{span}_{J}I^{3}$ ---
гипокомпактный
идеал в $J$. Но
алгебра $I/\left(
\operatorname*{span}_{J}I^{3}\right)  $
бикомпактна,
поскольку $\mathrm{L}%
_{a}\mathrm{R}_{b}=0$ для
любых $a,b\in I/\left(  \operatorname*{span}%
_{J}I^{3}\right)  $. По
следствию \ref{3s},
идеал $I$
гипокомпактен.
Следовательно,
$\mathcal{R}_{\mathrm{hc}}(J)\subset I\subset\mathcal{R}_{\mathrm{hc}}(A).$
\end{proof}

\begin{lemma}
\label{lhccel} Алгебра
$A/\mathcal{R}_{\mathrm{hc}}(A)$ не
имеет
ненулевых
гипокомпактных
идеалов и
компактных элементов.
\end{lemma}

\begin{proof}
Если $J$ ---
гипокомпактный
идеал
алгебры $A/\mathcal{R}_{\mathrm{hc}%
}(A)$, то, по
следствию \ref{3s},
его прообраз
$\left\{  x\in A:q_{\mathcal{R}_{\mathrm{hc}}(A)}(x)\in J\right\}  $ ---
гипокомпактный
идеал в $A$,
строго
содержащий
$\mathcal{R}_{\mathrm{hc}}(A)$, противоречие.

По лемме \ref{lhc3},
если $A/\mathcal{R}_{\mathrm{hc}}(A)$
имеет
ненулевые
компактные
элементы, то
у нее есть
ненулевые
бикомпактные
идеалы, что невозможно.
\end{proof}

\begin{theorem}
\label{thcrad}
Отображение
$A\longmapsto\mathcal{R}_{\mathrm{hc}}(A)$ ---
наследственный
топологический
радикал.
\end{theorem}

\begin{proof}
(R2) и (R5) доказаны
в леммах \ref{lhccel} и
\ref{lhcint}; докажем (R1).

Пусть $f:A\rightarrow B$ ---
непрерывный
открытый
эпиморфизм.
Обозначим,
для
краткости,
$q_{\mathcal{R}_{\mathrm{hc}}(B)}$ через $q$.
Ясно, что $q{\circ}f$ ---
непрерывный
открытый
эпиморфизм $A$
на $B/\mathcal{R}_{\mathrm{hc}}(B)$. Так
как идеал $\mathcal{R}%
_{\mathrm{hc}}(A)$
гипокомпактен,
то, по
предложению
\ref{phc1}, её образ $\left(
q{\circ}f\right)  (\mathcal{R}_{\mathrm{hc}}(A))$ либо
равен нулю,
либо
содержит
ненулевой
компактный
элемент
алгебры $B/\mathcal{R}_{\mathrm{hc}%
}(B)$. Но
последнее
невозможно в
силу леммы \ref{lhccel}.
Значит, $\left(  q{\circ}f\right)
(\mathcal{R}_{\mathrm{hc}}(A))=0$ и $f(\mathcal{R}_{\mathrm{hc}%
}(A))\subset\mathcal{R}_{\mathrm{hc}}(B).$
\end{proof}

Таким
образом,
\textit{регулярный
гипокомпактный
радикал} $\mathcal{R}_{\mathrm{hc}%
}^{r}$, получаемый
регулярной
процедурой, ---
НТР.
Очевидно,
что $\mathcal{R}_{\mathrm{hc}}\leq\mathcal{R}_{\mathrm{hc}%
}^{r}$.

\subsection{Радикал
$\operatorname*{Rad}^{r}\wedge\mathcal{R}_{\mathrm{hc}}$.}

Для
дальнейшего
важна связь
между
радикалами
$\mathcal{R}_{\mathrm{hc}}\wedge\operatorname*{Rad}^{r}$ и
$\mathcal{R}_{\mathrm{cq}}$.

\begin{theorem}
\label{order} Радикал
$\operatorname*{Rad}^{r}\wedge\mathcal{R}_{\mathrm{hc}}$
является
наследственным,
сопоставляет
каждой
алгебре $A$
идеал $\mathcal{R}_{\mathrm{hc}}\left(
A\right)  \cap\operatorname*{Rad}^{r}\left(  A\right)  $ и
удовлетворяет
неравенствам
$\mathcal{R}_{\mathrm{hc}}\wedge\operatorname*{Rad}^{r}\leq\mathcal{R}%
_{\mathrm{cq}}\leq\operatorname*{Rad}^{r}$.
\end{theorem}

\begin{proof}
Так как
радикалы $\mathcal{R}%
_{\mathrm{hc}}$ и $\operatorname*{Rad}^{r}$
наследственны,
то $\mathcal{R}_{\mathrm{hc}}\wedge\operatorname*{Rad}^{r}$ ---
наследственный
радикал и
сопоставляет
каждой
алгебре $A$
замкнутый
идеал $\mathcal{R}_{\mathrm{hc}}\left(
A\right)  \cap\operatorname*{Rad}^{r}\left(  A\right)  $ по
лемме \ref{sup}.

Докажем
сначала, что
каждая $\operatorname*{Rad}^{r}%
$%
-радикальная
бикомпактная
алгебра $A$
компактно
квазинильпотентна.
В самом деле,
если $M\in\mathfrak{K}(A)$, то $r(M)=0$,
поскольку $A$
состоит из
квазинильпотентных
элементов.
Пусть $N=\mathrm{L}_{M}\mathrm{R}_{M}$.
Тогда $r(N)=0$ по
лемме \ref{pass}. Так
как $N$ ---
предкомпактное
множество
компактных
операторов,
то $\rho(N)=0$ по (\ref{bw}) и $\rho(M)=0$
по лемме \ref{pass}.

Пусть $A$ ---
произвольная
алгебра и
$J=\operatorname*{Rad}^{r}(A)\cap\mathcal{R}_{\mathrm{hc}}(A)$.
Так как
идеал $J$
гипокомпактен,
то, по
предложению
\ref{phc1},
существует
возрастающая
трансфинитная
цепочка $(J_{\alpha})_{\alpha
\leq\gamma}$ замкнутых
идеалов в $A$,
такая что $J_{0}=0$,
$J_{\gamma}=J$, и все
факторы $J_{\alpha+1}/J_{\alpha}$
бикомпактны.
Из
доказанного
выше
следует, что
эти факторы
$\mathcal{R}_{\mathrm{cq}}$%
-радикальны.
Тогда идеал $J$
также $\mathcal{R}_{\mathrm{cq}}$%
-радикален
по лемме \ref{transext}.

Из теоремы
\ref{cq-rad}(d) вытекает,
что
замыкание
компактно
квазинильпотентного
идеала
алгебры в её
пополнении
состоит из
квазинильпотентных
элементов и,
следовательно,
лежит в
радикале
Джекобсона
пополнения.
Отсюда
вытекает
неравенство
$\mathcal{R}_{\mathrm{cq}}\leq\operatorname*{Rad}^{r}$.
\end{proof}

В частности,
мы доказали,
что на
гипокомпактных
алгебрах
регулярный
радикал
Джекобсона
совпадает с
компактно
квазинильпотентным
радикалом.

\section{Основные
БВ-формулы}

\subsection{Смешанная
БВ-формула.}

Здесь мы
докажем, что
\begin{equation}
\rho(M)=\max\{\rho^{\chi}(M),r(M)\} \label{e2}%
\end{equation}
для любого
$M\in\mathfrak{K}(A)$. Это
равенство
достаточно
установить,
считая, что $A$ ---
банахова
алгебра и
что $M$
порождает $A$. В
самом деле,
все входящие
в (\ref{e2}) величины
не меняются
при замене
алгебры её
пополнением.
Кроме того, $\rho(M)$
и $r(M)$ не
изменятся,
если их
вычислять в
замкнутой
подалгебре
$\mathcal{A}(M)$,
порожденной
$M$. Значение же
$\rho^{\chi}(M)=\rho_{\chi}(\mathrm{L}_{M}\mathrm{R}_{M})^{1/2}$
при этом не
может
возрасти,
так как
операторы $\mathrm{L}%
_{M}\mathrm{R}_{M}$ сужаются
на
подпространство
$\mathcal{A}(M)$, а
нетривиальной
частью (\ref{e2})
является
лишь
неравенство
$\leq$.

Полугруппа $G$
элементов
алгебры $A$
называется
\textit{полугруппой
Раджави} (или,
коротко, $R$%
\textit{-полугруппой}%
), если ${\lambda}a\in G$ для
любых $a\in G$ и $\lambda\geq0$.
Пусть $M\subset A$, $\mathcal{S}(M)$ ---
полугруппа,
порожденная
$M$, и $\mathcal{S}_{+}\left(  M\right)  $ --- $R$%
-полугруппа,
порожденная
$M$. Ясно, что $\mathcal{S}%
(M)=\cup_{n=1}^{\infty}M^{n}$ и $\mathcal{S}_{+}\left(  M\right)
=\mathbb{R}_{+}\mathcal{S}(M),$ где $\mathbb{R}_{+}%
=\{t\in\mathbb{R}:t\geq0\}$.

Пусть $N$ --
множество
операторов в
банаховом
пространстве
и $G=\mathcal{S}(N)$.
Оператор $T\in N^{n}$
называется
\textit{ведущим}
(точнее, $n$%
-\textit{ведущим}),
если $\left\Vert T\right\Vert \geq\left\Vert
S\right\Vert $ для всех $S\in
\cup_{k<n}N^{k}$.
Уточнение
термина
существенно,
так как
оператор
может быть в
разных $N^{n}$.
Далее,
\textit{ведущая
последовательность}
в $G$ --- это
последовательность
$n(k)$-ведущих
операторов
$T_{k}\in N^{n\left(  k\right)  }$, такая
что $n(k)\rightarrow\infty$ и $\left\Vert
T_{k}\right\Vert \rightarrow\infty$ при $k\rightarrow\infty
$. Если $G$
неограничена,
то очевидно,
что хотя бы
одна ведущая
последовательность
в $G$ существует.

\begin{lemma}
\label{L3} Пусть $N$ ---
предкомпактное
множество
операторов.
Если верно $\rho_{\chi}(N)<\rho(N)=1$ и
полугруппа
$\mathcal{S}(N)$
неограничена,
то
существует
последовательность
операторов
единичной
нормы $T_{n}\in\mathcal{S}_{+}(N)$,
сходящаяся к
компактному
оператору $T$.
Более того,
можно
выбрать в
качестве
такой
последовательности
любую
сходящуюся
подпоследовательность
из $S_{n}/\Vert S_{n}\Vert$, где $\left\{
S_{n}\right\}  $ ---
произвольная
ведущая
последовательность
в $\mathcal{S}(N)$.
\end{lemma}

\begin{proof}
Последовательность
$\left\{  S_{n}/\Vert S_{n}\Vert\right\}  $
предкомпактна
согласно
\cite[следствие
6.8(iii)]{ShT2000}; пусть $T$ --- её
предельная
точка. В силу
$\rho_{\chi}(N)<1$,
последовательность
$\left\{  \left\Vert S_{n}\right\Vert _{\chi}\right\}  $
ограничена и
тогда
последовательность
хаудорфовых
норм
операторов
$S_{n}/\Vert S_{n}\Vert$
стремится к
нулю.
Следовательно,
$T$ компактен.
\end{proof}

\begin{lemma}
\label{L4} Пусть $A=\mathcal{A}\left(  M\right)
$, где $M$
предкомпактно,
и пусть $N=\mathrm{L}_{M}\mathrm{R}_{M}$.
Если $\rho_{\chi}(N)<\rho(N)=1$ и
полугруппа
$\mathcal{S}(N)$
неограничена,
то $\overline{\mathcal{S}_{+}(N)}$
содержит
ненулевой
компактный
оператор $T$,
такой что

\begin{itemize}
\item[(i)] оператор
$\mathrm{L}_{Th}\mathrm{R}_{Tg}$
компактен
при любых $h,g\in A$;

\item[(ii)] если $r(N)<1$, то
$T(A)\subset\operatorname*{Rad}(A)$.
\end{itemize}
\end{lemma}

\begin{proof}
Все элементы
полугруппы
$\mathcal{S}_{+}(N)$ можно
записывать в
виде $F={\lambda}\mathrm{L}_{a}\mathrm{R}_{b}$,
где $a,b\in\mathcal{S}(M)$ и ${\lambda\geq}0$.
Для
краткости,
мы полагаем
$F^{\leftrightarrows}={\lambda}\mathrm{L}_{b}\mathrm{R}_{a}$. В
выборе $F^{\leftrightarrows}$
может
присутствовать
неоднозначность,
но
независимо
от этого
выбора, для
любых $h,g\in M$
выполняется
равенство $\mathrm{L}%
_{Fh}\mathrm{R}_{Fg}=F\mathrm{L}_{h}\mathrm{R}_{g}F^{\leftrightarrows}.$

Пусть $\left\{  S_{n}\right\}  $ ---
ведущая
последовательность
в $\mathcal{S}(N)$. Для
каждого её
элемента $S_{n}\in N^{m\left(
n\right)  }$, оператор
$S_{n}^{\leftrightarrows}$ тоже
можно
выбрать в $N^{m\left(
n\right)  }$, поэтому мы
можем
дополнительно
предполагать,
что
\begin{equation}
\Vert S_{n}\Vert\geq\Vert S_{n}^{\leftrightarrows}\Vert\label{e5}%
\end{equation}
для всех $n$. По
лемме \ref{L3}, есть
последовательность
операторов
$T_{n}=S_{k_{n}}/\Vert S_{k_{n}}\Vert$,
стремящаяся
к
компактному
оператору $T$.
Все
операторы $T_{n}%
^{\leftrightarrows}=S_{k_{n}}^{\leftrightarrows}/\Vert S_{k_{n}}\Vert$
являются
сжимающими в
силу (\ref{e5}).
Отсюда
последовательность
$\left\{  \mathrm{L}_{h}\mathrm{R}_{g}T_{n}^{\leftrightarrows}\right\}  $
ограничена
для
фиксированных
$h,g\in A$. В силу
этого
\[
\mathrm{L}_{Th}\mathrm{R}_{Tg}=\lim_{n\rightarrow\infty}\mathrm{L}{_{T_{n}%
h}\mathrm{R}_{T_{n}g}}=\lim_{n\rightarrow\infty}T_{n}\mathrm{L}_{h}%
\mathrm{R}_{g}T_{n}^{\leftrightarrows}=\lim_{n\rightarrow\infty}%
T\mathrm{L}_{h}\mathrm{R}_{g}T_{n}^{\leftrightarrows}%
\]
и оператор
$\mathrm{L}_{Th}\mathrm{R}_{Tg}$, как
предел
компактных
операторов,
компактен.
Следовательно,
$T\left(  A\right)  $ состоит
из совместно
компактных
элементов
алгебры $A$.

Предположим,
что $r(N)<1$, и
докажем, что
при любых $u,v,x\in
\mathcal{S}(M)$ элемент $u(Tx)v$
квазинильпотентен.
По
построению,
$T=\lim_{n\rightarrow\infty}{\lambda}_{k_{n}}S_{k_{n}}$,
где
\[
{\lambda}_{k_{n}}=\Vert S_{k_{n}}\Vert^{-1}\rightarrow0\text{
при }n\rightarrow\infty\text{ и }S_{k_{n}}%
=\mathrm{L}_{a_{n}}\mathrm{R}_{b_{n}}%
\]
для
некоторых $a_{n}%
,b_{n}\in M^{m\left(  n\right)  }$ и $m\left(  n\right)  \in\mathbb{N}$.
Тогда $ua_{n}xb_{n}v\in\mathcal{S}(M)$
для любого
$n\in\mathbb{N}$ и
последовательность
элементов ${\lambda
}_{k_{n}}ua_{n}xb_{n}v$
стремится к
компактному
элементу $u(Tx)v$
алгебры $A$. Из
\cite[теорема 4.4]{A68}
следует, что
спектр
элемента $u(Tx)v$
не более чем
счётен,
поэтому
спектральный
радиус
непрерывен в
$u(Tx)v$ согласно \cite{New}.
Поскольку $r(N)<1$,
множество $\{\rho
(S):S\in\mathcal{S}(N)\}$
ограничено и
тогда,
учитывая,
что $\mathrm{W}_{ua_{n}xb_{n}v}\in\mathcal{S}\left(
N\right)  $, получим
\[
\rho(u(Tx)v)=\lim_{n\rightarrow\infty}{\lambda}_{k_{n}}\rho(ua_{n}%
xb_{n}v)=\lim_{n\rightarrow\infty}{\lambda}_{k_{n}}\rho(\mathrm{W}%
_{ua_{n}xb_{n}v})^{1/2}=0.
\]
Таким
образом,
множество $\mathcal{S}%
_{+}(M)(Tx)\mathcal{S}_{+}(M)$ при
любом $x\in\mathcal{S}(M)$
состоит из
совместно
компактных
квазинильпотентных
элементов
алгебры $A$. Это
же верно для
его
замыкания $\overline
{\mathcal{S}_{+}(M)(Tx)\mathcal{S}_{+}(M)}$. Так
как
\[
Tx\in\overline{\mathcal{S}_{+}(N)}\mathcal{S}(M)=\overline{\mathrm{L}%
_{\mathcal{S}_{+}(M)}\mathrm{R}_{\mathcal{S}_{+}(M)}}\mathcal{S}%
(M)\subset\overline{\mathcal{S}_{+}(M)\mathcal{S}(M)\mathcal{S}_{+}(M)}%
\subset\overline{\mathcal{S}_{+}(M)},
\]
то множество
$\overline{\mathcal{S}_{+}(M)(Tx)\mathcal{S}_{+}(M)}$
является
полугруппой.
По лемме \ref{semig},
его
замкнутая
линейная
оболочка $J$
состоит из
квазинильпотентных
элементов.
Так как $A=\operatorname*{span}%
\mathcal{S}_{+}(M)$, то $J$ ---
идеал $\overline{A(Tx)A}$
алгебры $A$ и,
таким
образом, $A(Tx)A\subset
\operatorname*{Rad}(A)$. В силу
квазирегулярной
характеризации
радикала
Джекобсона,
$A(Tx)\subset\operatorname*{Rad}(A)$ и, по
той же
причине, $Tx\in\operatorname*{Rad}%
(A)$. Учитывая,
что $A=\operatorname*{span}\mathcal{S}(M)$,
получим, что
$T(A)\subset\operatorname*{Rad}(A)$.
\end{proof}

Бикомпактные
идеалы,
состоящие из
квазинильпотентных
элементов,
мы будем
называть $qb$%
\textit{-идеалами}.

\begin{corollary}
\label{imcor} Пусть $M\in\mathfrak{K}\left(
A\right)  $. Если $~\max\left\{  \rho^{\chi}\left(
M\right)  ,r\left(  M\right)  \right\}  <\rho(M)=1$ и
полугруппа
$\mathcal{S}(\mathrm{L}_{M}\mathrm{R}_{M})$
неограничена,
то ${\mathcal{A}(M)}$ имеет
ненулевой $qb$-идеал.
\end{corollary}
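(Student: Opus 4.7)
The plan is to apply Lemma \ref{L4} to the Banach algebra $B = \mathcal{A}(M)$ equipped with the semigroup $N = \mathrm{L}_M \mathrm{R}_M$, and then to read off $\operatorname*{Rad}(\mathcal{A}(M))$ as the required $qb$-ideal. The numerical hypotheses of Lemma \ref{L4} transfer to $B$ and $N$ via Lemma \ref{pass}: one has $\rho(N) = \rho(M)^2 = 1$ and $r(N) = r(M)^2 < 1$, and by the definition $\rho^\chi(M) = \rho_\chi(N)^{1/2}$ also $\rho_\chi(N) = \rho^\chi(M)^2 < 1 = \rho(N)$. The precompactness of $\mathcal{S}(N) = \mathcal{S}(\mathrm{L}_M \mathrm{R}_M)$ is built into the hypotheses. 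Finally, the remark at the opening of this section shows that passing from the ambient algebra to $\mathcal{A}(M)$ can only decrease $\rho^\chi(M)$, so the strict inequality persists in $B$, and all the assumptions of Lemma \ref{L4} are in force.

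Applying Lemma \ref{L4} then produces a nonzero compact operator $T \in \overline{\mathcal{S}_+(N)}$ such that $T(B) \subset \operatorname*{Rad}(B)$. Since $T \neq 0$ as an operator on $B$, the image $T(B)$ is nonzero, so $\operatorname*{Rad}(B) \neq 0$. The Jacobson radical of any Banach algebra consists of elements of zero spectral radius and hence of quasinilpotent elements, so $\operatorname*{Rad}(\mathcal{A}(M))$ is a nonzero ideal of quasinilpotent elements in $\mathcal{A}(M)$, that is, a $qb$-ideal.

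No serious obstacle is expected: Lemma \ref{L4} carries essentially all the analytic weight, and the corollary amounts to a direct repackaging of its conclusion. The only items meriting a moment of care are the translation of the numerical hypotheses from $M$ to $N = \mathrm{L}_M \mathrm{R}_M$ via Lemma \ref{pass}, together with the small observation that a nonzero operator on $\mathcal{A}(M)$ must send at least one element to a nonzero element of $\operatorname*{Rad}(\mathcal{A}(M))$.
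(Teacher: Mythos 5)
Your proposal is correct and follows essentially the same route as the paper: both apply Lemma \ref{L4} (whose hypotheses transfer to $N=\mathrm{L}_M\mathrm{R}_M$ via Lemma \ref{pass} and the identity $\rho^{\chi}(M)=\rho_{\chi}(N)^{1/2}$) and then extract the $qb$-ideal from the inclusion $T(\mathcal{A}(M))\subset\operatorname*{Rad}(\mathcal{A}(M))$ with $T\neq 0$. The paper phrases the conclusion as ``each nonzero $Tx$ generates a $qb$-ideal'' while you take $\operatorname*{Rad}(\mathcal{A}(M))$ itself; this is an immaterial difference.
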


\begin{proof}
Пусть $T$ ---
компактный
оператор,
построенный
в лемме \ref{L4}.
Тогда любой
ненулевой
элемент $Tx\in{\mathcal{A}(M)}$
порождает $qb$-идеал.
\end{proof}

\begin{lemma}
\label{L5} Пусть $M\in\mathfrak{K}\left(
A\right)  $. Если $\mathcal{A}{(M)}$ не
имеет
ненулевых $qb$%
-идеалов, то
равенство
\emph{(\ref{e2})} справедливо.
\end{lemma}

\begin{proof}
Предположим,
что (\ref{e2}) не
выполнено.
Мы можем
считать, что
$\rho(M)=1$. Пусть $N=\mathrm{L}_{M}%
\mathrm{R}_{M}$, тогда $\rho(N)=1$ в
силу леммы \ref{pass}.

Если
полугруппа
$\mathcal{S}(N)$
ограничена,
то $\max\left\{  \rho_{\chi}(N),r\left(  N\right)  \right\}  =1$
согласно
лемме \ref{special}.
Если $\rho_{\chi}(N)=1$, то $\rho^{\chi
}(M)=1$. Если же $\rho_{\chi}(N)<1$,
то $r(N)=1$ и $r(M)=1$ по
лемме \ref{pass}. В
обоих
случаях
выполняется
(\ref{e2}), что
противоречит
сделанному
предположению.
Таким
образом, $\mathcal{S}(N)$
неограничена.
По следствию
\ref{imcor}, $\mathcal{A}\left(  M\right)  $
имеет
ненулевой $qb$%
-идеал, что
невозможно
по условию леммы.
\end{proof}

\begin{theorem}
\label{rho"} Формула $\rho
(M)=\max\{\rho^{\chi}(M),r(M)\}$ верна
для любой
нормированной
алгебры $A$ и
любого $M\in\mathfrak{K}(A)$.
\end{theorem}

\begin{proof}
Как мы уже
отмечали,
можно
считать, что
$A=\mathcal{A}(M)$. Пусть $J=\operatorname*{Rad}%
(A)\cap\mathcal{R}_{\mathrm{hc}}(A)$. Так
как, в силу
теоремы \ref{order}, $J\subset
\mathcal{R}_{\mathrm{cq}}(A)$, то $\rho(M)=\rho(M/J)$
по теореме \ref{cq-rad}.
Далее,
алгебра $A/J$ не
имеет
ненулевых $qb$%
-идеалов.
Действительно,
пусть $I$ ---
какой-нибудь
$qb$-идеал в $A/J$,
тогда его
прообраз $U$ в $A$
содержится в
$\operatorname*{Rad}(A)$ и $\mathcal{R}_{\mathrm{hc}}(A)$,
поскольку
радикалы
устойчивы
относительно
расширений.
Поэтому $U\subset J$,
откуда $I=0$.

Учитывая,
что $A/J=\mathcal{A}\left(  M/J\right)  $, и
применяя
лемму \ref{L5},
заключаем,
что
\[
\rho(M)=\rho(M/J)=\max\{\rho^{\chi}(M/J),r(M/J)\}\leq\max\{\rho^{\chi
}(M),r(M)\}
\]
в силу (\ref{MJM}).
Обратное
неравенство очевидно.
\end{proof}

\subsection{
Операторная
БВ-формула.}

Теперь мы
можем
доказать
формулу (\ref{gbwf}).

\begin{theorem}
\label{top} Если
множество $M\subset
\mathcal{B}(\mathcal{X})$
предкомпактно,
то
\[
\rho(M)=\max\{\rho_{e}(M),r(M)\}.
\]

\end{theorem}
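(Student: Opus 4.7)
The plan is to reduce to the already-established Theorem \ref{rho"}, which gives $\rho(M) = \max\{\rho^\chi(M), r(M)\}$, and then trade $\rho^\chi(M)$ for $\rho_e(M)$ using the Calkin structure available in $\mathcal{B}(\mathcal{X})$. Since the inequalities $\rho_e(M) \le \rho(M)$ and $r(M) \le \rho(M)$ are immediate, the task is to bound $\rho(M)$ above by $\max\{\rho_e(M), r(M)\}$.

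The central step will be to establish the estimate $\rho^\chi(M)^2 \le \rho_e(M)\,\rho(M)$. Starting from the composition rule $(\mathrm{L}_a \mathrm{R}_b)(\mathrm{L}_c \mathrm{R}_d) = \mathrm{L}_{ac}\mathrm{R}_{db}$, one reads off the inclusion $(\mathrm{L}_M \mathrm{R}_M)^n \subset \mathrm{L}_{M^n}\mathrm{R}_{M^n}$; Lemma \ref{ineq} applied to $M^n$ then yields
\[
\|(\mathrm{L}_M \mathrm{R}_M)^n\|_\chi \le \|\mathrm{L}_{M^n}\mathrm{R}_{M^n}\|_\chi \le 16\,\|M^n\|_\chi\,\|M^n\|.
\]
Taking $n$-th roots as $n\to\infty$ produces $\rho_\chi(\mathrm{L}_M \mathrm{R}_M) \le \rho_\chi(M)\,\rho(M)$, and combining with $\rho_\chi(M) \le \rho_e(M)$ from (\ref{he}) delivers the claimed inequality, since $\rho^\chi(M)^2 = \rho_\chi(\mathrm{L}_M \mathrm{R}_M)$ by definition.

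With this estimate in hand, I would conclude by a short case split based on Theorem \ref{rho"}. If $\rho^\chi(M) < \rho(M)$, then the maximum in Theorem \ref{rho"} is forced to be attained at $r(M)$, so $\rho(M) = r(M) \le \max\{\rho_e(M), r(M)\}$. Otherwise $\rho^\chi(M) = \rho(M)$, and the key estimate degenerates to $\rho(M) \le \rho_e(M)$, giving $\rho(M) \le \max\{\rho_e(M), r(M)\}$ again. The main obstacle is the key estimate itself, but since every ingredient—Lemma \ref{ineq}, inequality (\ref{he}), and Theorem \ref{rho"}—is already proved in the paper, the remaining work amounts to a formal assembly together with routine limit-taking, with no essentially new difficulty expected.
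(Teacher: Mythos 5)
Your proposal is correct and follows essentially the same route as the paper: the key estimate $\rho_{\chi}(\mathrm{L}_{M}\mathrm{R}_{M})\leq\rho_{\chi}(M)\rho(M)$ obtained from Lemma \ref{ineq} applied to $M^{n}$, combined with Theorem \ref{rho"} and inequality (\ref{he}). The only cosmetic difference is that you finish by a case split on whether $\rho^{\chi}(M)<\rho(M)$, whereas the paper divides the inequality $\rho(M)^{2}\leq\max\{\rho_{\chi}(M)\rho(M),r(M)^{2}\}\leq\max\{\rho_{\chi}(M),r(M)\}\,\rho(M)$ directly by $\rho(M)$.
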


\begin{proof}
По лемме \ref{ineq}, $\Vert
\mathrm{L}_{M}\mathrm{R}_{M}\Vert_{\chi}\leq16\Vert M\Vert_{\chi}\Vert
M\Vert.$ Заменяя $M$ на
$M^{n}$, извлекая
корни $n$-й
степени и
переходя к
пределу при
$n\rightarrow\infty$, придём к
неравенству
$\rho^{\chi}\left(  M\right)  ^{2}=\rho_{\chi}(\mathrm{L}_{M}\mathrm{R}%
_{M})\leq\rho_{\chi}(M)\rho(M).$
Применяя
теорему \ref{rho"},
получим $\rho(M)^{2}\leq\max
\{\rho_{\chi}(M)\rho(M),r(M)^{2}\},$
откуда, в
силу $r(M)\leq\rho(M)$,
\[
\rho(M)\leq\max\{\rho_{\chi}(M),r(M)\}\leq\max\{\rho_{e }(M),r(M)\}
\]
согласно (\ref{he}).
Обратное
неравенство очевидно.
\end{proof}

\subsection{Алгебраическая
БВ-формула.\label{ss43}}

Наша
следующая
цель ---
доказать,
что
\begin{equation}
\rho(M)=\max\{\rho(M/\mathcal{R}_{\mathrm{hc}}^{r}(A)),r(M)\} \label{alg}%
\end{equation}
для любой
нормированной
алгебры $A$ и
для любого
$M\in\mathfrak{K}(A)$. Нам
будет
удобнее
доказывать
(\ref{alg}) в более
гибкой
форме:
\begin{equation}
\rho(M)=\max\{\rho(M/J),r(M)\} \label{id}%
\end{equation}
для любого
замкнутого
гипокомпактного
идеала $J$
алгебры $A$.
Снова
достаточно
показать
справедливость
этой формулы
для
банаховых
алгебр,
поскольку,
как было
отмечено
ранее,
значения
радиусов в (\ref{id})
не меняются
при
пополнении
алгебр и
замыкании
идеала в
пополнении,
в то же время
как
замыкание
идеала гипокомпактно.

Начнём со
случая,
когда идеал $J$
алгебры $A$ бикомпактен.

\begin{lemma}
\label{L6} Пусть $J$ ---
замкнутый
бикомпактный
идеал
алгебры $A$ и $M\in
\mathfrak{K}\left(  A\right)  $. Тогда
\begin{equation}
{\rho}_{e}(\mathrm{L}_{M}\mathrm{R}_{M})\leq\rho(M/J)\rho(M). \label{roe}%
\end{equation}

\end{lemma}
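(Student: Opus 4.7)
The plan is to transfer the proof of Lemma~\ref{ineq} from the Hausdorff measure of noncompactness to the essential norm, with the ideal $J$ playing the role of the precompactness factor. The starting point is the algebraic identity $(L_MR_M)^n=L_{M^n}R_{M^n}$, valid because $L_aR_b\cdot L_{a'}R_{b'}=L_{aa'}R_{b'b}$ (the operators $L_a$ and $R_b$ commute, $L$ is a homomorphism, $R$ an anti-homomorphism). This lets me rewrite
\[
\rho_e(L_MR_M)=\lim_{n\to\infty}\Bigl(\sup_{u,v\in M^n}\|L_uR_v\|_e\Bigr)^{1/n},
\]
so it is enough to prove a pointwise estimate of the form $\|L_uR_v\|_e\le C\|q_J(u)\|\,\|v\|$ for some absolute constant $C$ and all $u,v\in A$. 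Applying it with $u,v\in M^n$, taking $n$-th roots, and using the standard limits $\|q_J(M)^n\|^{1/n}\to\rho(M/J)$ and $\|M^n\|^{1/n}\to\rho(M)$ then yields the claim.

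For the key estimate, given $\varepsilon>0$, use the definition of the quotient norm in $A/J$ to pick $u'\in A$ with $q_J(u')=q_J(u)$ and $\|u'\|\le\|q_J(u)\|+\varepsilon$; set $u''=u-u'\in J$ and split $L_uR_v=L_{u'}R_v+L_{u''}R_v$. The first summand is bounded in operator norm by $(\|q_J(u)\|+\varepsilon)\|v\|$, so what remains is to control the essential norm of $L_{u''}R_v$ when the left factor lies in $J$. The approach is to exploit the precompactness of $M^n\subset A$ together with the restriction/quotient bounds (\ref{restQuot}): a finite $\varepsilon$-net for $M^n$ produces finitely many candidate operators $L_{u''_i}R_{v_j}$, and the ideal structure of $J$ (range of $L_{u''}R_v$ lying inside $J$, combined with the behaviour of the measure of noncompactness under restriction to $J$) should permit a uniform compact approximation modulo an $\varepsilon\|M^n\|$ error.

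The main obstacle is precisely the compactness of $L_{u''}R_v$ for $u''\in J$: this is false elementwise in general, so one cannot argue on a single pair $(u,v)$. The precompactness hypothesis $M\in\mathfrak{K}(A)$ is indispensable and must enter through a uniform approximation on the whole precompact family $\{L_{u''}R_v:u,v\in M^n\}\subset\mathcal{B}(A)$, where the interaction between the ideal structure of $J$ and the precompactness of $M^n$ produces the essential-norm gain. Once that gain is quantified and combined with the decomposition above, summing over $u,v\in M^n$, taking the $n$-th root, and letting $n\to\infty$ yields $\rho_e(L_MR_M)\le\rho(M/J)\rho(M)$.
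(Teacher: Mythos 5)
Your reduction to a pointwise norm estimate followed by $n$-th roots is the right general shape, and the identity $(\mathrm{L}_M\mathrm{R}_M)^n=\mathrm{L}_{M^n}\mathrm{R}_{M^n}$ is correctly used. But the key estimate is not established, and the route you sketch for it cannot work. Your decomposition $\mathrm{L}_u\mathrm{R}_v=\mathrm{L}_{u'}\mathrm{R}_v+\mathrm{L}_{u''}\mathrm{R}_v$ controls only the left factor modulo $J$ and leaves a remainder $\mathrm{L}_{u''}\mathrm{R}_v$ with $u''\in J$ but $v$ arbitrary. The hypothesis that $J$ is a bicompact ideal says nothing about such operators: it gives compactness of $\mathrm{L}_a\mathrm{R}_b$ only when \emph{both} entries lie in $J$ (and, strictly speaking, only as an operator on $J$, not on $A$). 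No uniform approximation over the precompact family $\{\mathrm{L}_{u''}\mathrm{R}_v\}$ can repair this: if $v$ stays far from $J$ (e.g.\ $v=1$ in the unital case, where $\mathrm{L}_{u''}\mathrm{R}_v=\mathrm{L}_{u''}$), the operator has no reason to be essentially small, however precompact $M^n$ is. The paper's fix is to approximate \emph{both} factors: it takes $u,v\in I=\operatorname{span}J^{2}$ with $\|a-u\|,\|b-v\|\le\|M/I\|+\varepsilon$, writes $\mathrm{L}_a\mathrm{R}_b-\mathrm{L}_u\mathrm{R}_v=\mathrm{L}_{a-u}\mathrm{R}_b+\mathrm{L}_u\mathrm{R}_{b-v}$, and uses that $\mathrm{L}_u\mathrm{R}_v$ is genuinely compact on $A$ because elements of $J^{2}$ factor through $J$ (via the identity (\ref{prod})), which upgrades compactness on $J$ to compactness on $A$. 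This yields $\|\mathrm{L}_M\mathrm{R}_M\|_e\le 3\|M/I\|\,\|M\|$ and hence $\rho_e(\mathrm{L}_M\mathrm{R}_M)\le\rho(M/I)\rho(M)$.

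A second gap: even after the two-sided approximation one only obtains $\rho(M/I)$ with $I=\operatorname{span}J^{2}$, which is a priori larger than $\rho(M/J)$. The paper closes this by noting that $J/I$ has zero square in $A/I$, hence lies in $\mathcal{R}_{\mathrm{cq}}(A/I)$, so Theorem \ref{cq-rad}(c) gives $\rho(M/I)=\rho(M/J)$ --- and \emph{this} is where the precompactness of $M$ genuinely enters, not in forcing compactness of the correction terms as you surmised. Since your proposal contains neither the two-sided approximation nor the passage through $J^{2}$ and $\mathcal{R}_{\mathrm{cq}}$, it does not prove the lemma as written.
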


\begin{proof}
Пусть $I=\operatorname{span}J^{2}$.
Докажем
сначала
неравенство
\begin{equation}
\Vert\mathrm{L}_{M}\mathrm{R}_{M}\Vert_{e}\leq3\Vert M/I\Vert\Vert M\Vert.
\label{norm}%
\end{equation}
Пусть $a,b\in M$, $\varepsilon>0$ и
$u,v\in I$, такие что
$\max\left\{  \Vert a-u\Vert,\Vert b-v\Vert\right\}  <\Vert M/I\Vert
+\varepsilon.$ В
частности, $\left\Vert
u\right\Vert <\left\Vert a\right\Vert +\left\Vert a-u\right\Vert
\leq\left\Vert M\right\Vert +\Vert M/I\Vert+\varepsilon\leq2\left\Vert
M\right\Vert +\varepsilon$. Так как $J$
---
бикомпактный
идеал в $A$, то $I$
состоит из
совместно
компактных
элементов
алгебры $A$.
Тогда
оператор $\mathrm{L}%
_{u}\mathrm{R}_{v}$
компактен в $A$
и
\begin{align*}
\left\Vert \mathrm{L}_{a}\mathrm{R}_{b}\right\Vert _{e}  &  \leq\left\Vert
\mathrm{L}_{a}\mathrm{R}_{b}-\mathrm{L}_{u}\mathrm{R}_{v}\right\Vert
=\left\Vert \mathrm{L}_{a-u}\mathrm{R}_{b}+\mathrm{L}_{u}\mathrm{R}%
_{b-v}\right\Vert \leq\left\Vert a-u\right\Vert \left\Vert b\right\Vert
+\left\Vert u\right\Vert \left\Vert b-v\right\Vert \\
&  \leq(\Vert M/I\Vert+\varepsilon)\Vert M\Vert+(2\Vert M\Vert+\varepsilon
)(\Vert M/I\Vert+\varepsilon)\\
&  \leq(\Vert M/I\Vert+\varepsilon)(3\Vert M\Vert+\varepsilon).
\end{align*}
Устремляя $\varepsilon$
к нулю и беря
супремум по
всем $a,b\in M$,
получим (\ref{norm}).

Заменяя $M$ в (\ref{norm})
на $M^{n}$,
извлекая
корни $n$-й
степени и
переходя к
пределу при
$n\rightarrow\infty$, получим
${\rho}_{e}(\mathrm{L}_{M}\mathrm{R}_{M})\leq\rho(M/I)\rho(M)$.
Теперь,
чтобы
получить (\ref{roe}),
заметим, что $J/I$
---
нильпотентный
идеал в $A/I$,
откуда $J/I\in\mathcal{R}_{\mathrm{cq}%
}\left(  A/I\right)  $ и $\rho(M/J)=\rho(M/I)$ в
силу теоремы
\ref{cq-rad}.
\end{proof}

\begin{corollary}
\label{bi} Равенство
\emph{(\ref{id})}
выполнено
для любого
замкнутого
бикомпактного
идеала $J$ и
любого $M\in\mathfrak{K}\left(  A\right)
$.
\end{corollary}

\begin{proof}
Так как $\rho^{\chi}(M)^{2}={\rho}_{\chi
}(\mathrm{L}_{M}\mathrm{R}_{M})\leq{\rho}_{e}(\mathrm{L}_{M}\mathrm{R}%
_{M})\leq\rho(M/J)\rho(M)$
согласно (\ref{roe}),
то, применяя
теорему \ref{rho"} и
неравенство
$r(M)\leq\rho(M)$, получим
\[
\rho(M)^{2}=\max\left\{  \rho^{\chi}(M)^{2},r\left(  M\right)  ^{2}\right\}
\leq\max\{\rho(M/J)\rho(M),r(M)\rho(M)\},
\]
что
немедленно
влечет
равенство (\ref{id}).
\end{proof}

\begin{lemma}
\label{step} Пусть $I\subset K$ ---
замкнутые
идеалы
алгебры $A$ и $M\in
\mathfrak{K}\left(  A\right)  $. Если
алгебра $K/I$
бикомпактна
и равенство
\emph{(\ref{id})}
выполнено
при $J=I$, то оно
выполнено и
при $J=K$.
\end{lemma}

\begin{proof}
Изоморфизм
$A/K\rightarrow(A/I)/(K/I)$
обеспечивает
равенство $\rho
(M/K)=\rho((M/I)/(K/I)),$ что, в
свою
очередь,
даёт оценку
\begin{align*}
\rho(M)  &  =\max\{\rho(M/I),r(M)\}=\max\{\max\{\rho
((M/I)/(K/I)),r(M/I)\},r(M)\}\\
&  \leq\max\{\rho(M/K),r(M)\}.
\end{align*}
Обратное
неравенство очевидно.
\end{proof}

\begin{lemma}
\label{cont} Если $J=\overline{\bigcup J_{\alpha}}$,
где $(J_{\alpha})$ ---
направленная
по
возрастанию
сеть
замкнутых
идеалов
алгебры $A$, то
для любого
$M\in\mathfrak{K}(A)$%
\begin{align}
\Vert M/J\Vert &  =\lim_{\alpha}\Vert M/J_{\alpha}\Vert=\inf_{\alpha}\Vert
M/J_{\alpha}\Vert,\label{econt1}\\
\rho(M/J)  &  =\lim_{\alpha}\rho(M/J_{\alpha})=\inf_{\alpha}\rho(M/J_{\alpha
}). \label{econt}%
\end{align}

\end{lemma}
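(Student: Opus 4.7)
The plan is to prove (\ref{econt1}) first, using compactness of $M$, and then deduce (\ref{econt}) as a formal consequence by applying (\ref{econt1}) to each of the compact sets $M^{n}$.

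For (\ref{econt1}): since $J\supset J_{\alpha}$ for every $\alpha$, the natural map $A/J_{\alpha}\to A/J$ is a contraction, so $\|M/J\|\le\|M/J_{\beta}\|\le\|M/J_{\alpha}\|$ whenever $\beta\succeq\alpha$. Hence the numerical net $\alpha\mapsto\|M/J_{\alpha}\|$ is decreasing, its limit and infimum coincide, and the inequality $\|M/J\|\le\inf_{\alpha}\|M/J_{\alpha}\|$ is immediate. It remains to establish the reverse inequality. For a single element $a\in A$, the density of $\bigcup_{\alpha}J_{\alpha}$ in $J$ combined with monotonicity gives the pointwise identity $\|q_{J}(a)\|=\inf_{\alpha}\|q_{J_{\alpha}}(a)\|$. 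To upgrade this pointwise statement to a uniform statement on $M$, I would invoke total boundedness: given $\varepsilon>0$, choose a finite $\varepsilon$-net $a_{1},\dots,a_{k}\in M$, for each $i$ pick an index $\alpha_{i}$ with $\|q_{J_{\alpha_{i}}}(a_{i})\|\le\|M/J\|+\varepsilon$, and use directedness to obtain an $\alpha_{0}$ dominating all $\alpha_{i}$. Monotonicity of the net preserves the bound at $\alpha_{0}$, and the $1$-Lipschitz property of the seminorm $a\mapsto\|q_{J_{\alpha_{0}}}(a)\|$ then yields $\|q_{J_{\alpha_{0}}}(a)\|\le\|M/J\|+2\varepsilon$ for every $a\in M$. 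Letting $\varepsilon\to 0$ completes the argument.

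For (\ref{econt}): since $q_{J}$ is an algebra homomorphism, $(M/J)^{n}=M^{n}/J$ for every $n$, and similarly for each $J_{\alpha}$. By the definition (\ref{nach}), $\rho(M/J)=\inf_{n}\|M^{n}/J\|^{1/n}$ and likewise $\rho(M/J_{\alpha})=\inf_{n}\|M^{n}/J_{\alpha}\|^{1/n}$. The set $M^{n}$ is the image of the compact set $M\times\cdots\times M$ under the continuous multiplication map, so it is compact, and (\ref{econt1}) applies to each of them: $\|M^{n}/J\|=\inf_{\alpha}\|M^{n}/J_{\alpha}\|$. A routine interchange of the two infima then gives
\[
\rho(M/J)=\inf_{n}\inf_{\alpha}\|M^{n}/J_{\alpha}\|^{1/n}=\inf_{\alpha}\inf_{n}\|M^{n}/J_{\alpha}\|^{1/n}=\inf_{\alpha}\rho(M/J_{\alpha}),
\]
while monotonicity of the net $\alpha\mapsto\rho(M/J_{\alpha})$, inherited from the norm part, identifies this infimum with the limit.

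The only nontrivial step is the sup–inf interchange in the first equality, since $\|M/J_{\alpha}\|$ is a supremum over $M$ of the quantities $\|q_{J_{\alpha}}(\cdot)\|$ while the quotient by $J$ is a pointwise infimum over $\alpha$. This is precisely the place where compactness of $M$ is essential, via the finite $\varepsilon$-net and the Lipschitz property of the quotient seminorm; everything else in the proof is formal.
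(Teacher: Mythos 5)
Your proposal is correct and follows essentially the same route as the paper: monotonicity of the net plus a finite $\varepsilon$-net and the $1$-Lipschitz property of the quotient seminorms give (\ref{econt1}), and (\ref{econt}) is then deduced from the norms of the powers $M^{n}$. The only (cosmetic) difference is in the last step, where you commute the two infima $\inf_{n}\inf_{\alpha}$, while the paper fixes an $n$ with $\Vert M^{n}/J\Vert^{1/n}\leq\rho(M/J)+\varepsilon$ and estimates the $\limsup$ over $\alpha$ — both arguments are valid and equivalent in substance.
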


\begin{proof}
Для любого $\alpha$
выполняется
неравенство
$\left\Vert M/J\right\Vert \leq\left\Vert M/J_{\alpha}\right\Vert ,$
откуда $\rho(M/J)=\inf_{n}\left\Vert
M^{n}/J\right\Vert ^{1/n}\leq\inf_{\alpha}\inf_{n}\left\Vert M^{n}/J_{\alpha
}\right\Vert ^{1/n}=\inf_{\alpha}\rho(M/J_{\alpha})$ и,
кроме того, $\left\Vert
M/J\right\Vert \leq\inf_{\alpha}\left\Vert M/J_{\alpha}\right\Vert
\leq\underset{\alpha}{\,\lim\inf}\left\Vert M/J_{\alpha}\right\Vert .$ С
другой
стороны,
легко
проверить,
что для
любых $a\in M$ и $\varepsilon>0$
найдется
такое $\alpha=\alpha(a,\varepsilon)$,
что
\begin{equation}
\left\Vert a/J_{\alpha}\right\Vert \leq\left\Vert a/J\right\Vert
+\varepsilon\leq\left\Vert M/J\right\Vert +\varepsilon. \label{econt2}%
\end{equation}
Выберем в $M$
конечную $\varepsilon
$-сеть $N$; так
как для
любого $c\in A$ из
$\beta>\alpha$ следует
неравенство
$\left\Vert c/J_{\beta}\right\Vert \leq\left\Vert c/J_{\alpha}\right\Vert ,$
то, выбирая $\gamma
\geq\max\left\{  \alpha(a,\varepsilon):a\in N\right\}  $,
получим, в
силу (\ref{econt2}), что
$\operatorname*{dist}(b/J_{\gamma},N/J_{\gamma})\leq\operatorname*{dist}%
(b,N)\leq\varepsilon$ для
каждого $b\in M$, и
потому $\left\Vert M/J_{\gamma}\right\Vert
\leq\left\Vert N/J_{\gamma}\right\Vert +\varepsilon\leq\left\Vert
M/J\right\Vert +2\varepsilon.$
Следовательно,
\begin{equation}
\inf_{\alpha}\left\Vert M/J_{\alpha}\right\Vert \leq\underset{\alpha}%
{\,\lim\sup\,}\left\Vert M/J_{\alpha}\right\Vert \leq\left\Vert M/J\right\Vert
, \label{econt3}%
\end{equation}
откуда
вытекает (\ref{econt1}).
Выберем
такое $n\in\mathbb{N}$, что
$\left\Vert M^{n}/J\right\Vert ^{1/n}\leq\rho(M/J)+\varepsilon.$
Тогда, в силу
неравенства
(\ref{econt3}),
применённого
к $M^{n}$, имеем
\begin{align*}
\inf_{\alpha}\rho(M/J_{\alpha})  &  \leq\lim\sup\,\,\rho(M/J_{\alpha}%
)\leq\underset{\alpha}{\,\lim\sup\,}\Vert M^{n}/J_{\alpha}\Vert^{1/n}%
\leq\left\Vert M^{n}/J\right\Vert ^{1/n}\\
&  \leq\rho(M/J)+\varepsilon.
\end{align*}
Это
доказывает
справедливость
равенства (\ref{econt}).
\end{proof}

Теперь мы
докажем
формулу (\ref{id}) и,
тем самым,
формулу (\ref{alg}).

\begin{theorem}
\label{algGBWF}Формула $\rho
(M)=\max\{\rho(M/J),r(M)\}$\textrm{ }%
справедлива
для любого
замкнутого
гипокомпактного
идеала $J$ и
любого $M\in\mathfrak{K}\left(  A\right)
$.
\end{theorem}

\begin{proof}
Найдётся
возрастающая
трансфинитная
цепочка $\{J_{\alpha}\}_{\alpha
\leq\beta}$ замкнутых
идеалов в $A$,
такая что $J_{0}=0$,
$J_{\beta}=J$, и все $J_{\alpha+1}/J_{\alpha}$
бикомпактны.
Если есть
ординалы $\alpha$,
для которых
(\ref{id}) при $J=J_{\alpha}$
ложно, то
пусть $\gamma$ ---
наименьший
из них. Он не
является
предельным в
силу леммы \ref{cont},
и не имеет
предыдущего
в силу леммы
\ref{step}. Значит, (\ref{id})
выполнено
для всех $\alpha$.
\end{proof}

Из теоремы
Вала \cite{Vala}
следует, что
$\mathcal{K}(\mathcal{X})$ ---
бикомпактный
идеал в $\mathcal{B}(\mathcal{X})$.
Пусть $\mathcal{K}_{w}(\mathcal{X})$ ---
идеал всех
слабо
компактных
операторов в
$\mathcal{X}$. Как
известно (см.
\cite[следствие
6.8.13]{DS}), для
широкого
класса
банаховых
пространств
$\mathcal{X}$,
включающего
все
пространства
$L^{1}$
суммируемых
функций,
произведение
двух
операторов
из $\mathcal{K}_{w}(\mathcal{X})$
компактно,
так что
фактор-алгебра
$\mathcal{K}_{w}(\mathcal{X})/\mathcal{K}(\mathcal{X})$
имеет
тривиальное
умножение, а
значит,
бикомпактна.
Поэтому для
таких $\mathcal{X}$
идеал $\mathcal{K}_{w}(\mathcal{X})$
гипокомпактен
и потому
содержится в
$\mathcal{R}_{\mathrm{hc}}(\mathcal{B}(\mathcal{X}))$.
Таким
образом,
идеал $\mathcal{R}_{\mathrm{hc}}(\mathcal{B}%
(\mathcal{X}))$ может
строго
включать $\mathcal{K}%
(\mathcal{X})$.

\section{Дальнейшее
усиление
алгебраической
БВ-формулы}

\subsection{ В поисках
идеала.}

Формулы типа
(\ref{id}) тем
полезнее,
чем больше
входящий в
них идеал $J$.
При $J=A$ такая
формула
означает,
что в
алгебре $A$
величины $\rho(M)$ и
$r(M)$ совпадают
на $\mathfrak{K}(A)$, а при $J=0$
равенство (\ref{id})
тривиально.
Поэтому
естественно
ставить
задачу об
отыскании
(или хотя бы о
существовании)
наибольшего
идеала с
таким
свойством.
Обозначим
через $BW(A)$
совокупность
всех
замкнутых
идеалов $J\subset A$,
для которых
равенство (\ref{id})
верно для
любого $M\in\mathfrak{K}(A)$;
такие идеалы
будем
называть
\textit{BW-идеалами}.
Удобно
выделить
следующий
результат,
который
легко доказывается.

\begin{lemma}
\label{or}Пусть $I\subset J$ ---
замкнутые
идеалы в $A$.
Если $J\in BW(A)$, то $I\in BW(A)$.
\end{lemma}

Следующий
результат
доказывается
аналогично
лемме \ref{step}.

\begin{lemma}
\label{BW-step} Пусть $I\subset J$ ---
замкнутые
идеалы в $A$.
Если $I\in BW(A)$ и $J/I\in BW(A/I)$,
то $J\in BW(A)$.
\end{lemma}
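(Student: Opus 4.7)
The plan is to chain the two BW-hypotheses in succession. Fix $M \in \mathfrak{K}(A)$. Since $I \in BW(A)$, we already have
\[
\rho(M) = \max\{\rho(M/I),\, r(M)\},
\]
so the task reduces to bounding $\rho(M/I)$ from above by $\max\{\rho(M/J),\, r(M)\}$.

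For this I would use the second hypothesis. The canonical projection $A \to A/I$ is a contractive homomorphism, so $M/I$ is a precompact subset of $A/I$, i.e.\ $M/I \in \mathfrak{K}(A/I)$; and $J/I$ is a closed ideal of $A/I$ with $J/I \in BW(A/I)$ by assumption. Applying the BW-property of $J/I$ in $A/I$ to the set $M/I$ gives
\[
\rho(M/I) = \max\{\rho((M/I)/(J/I)),\, r(M/I)\}.
\]

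Next I would invoke the standard third-isomorphism-type identification $(A/I)/(J/I) \cong A/J$, under which $M/I$ corresponds to $M/J$; this yields $\rho((M/I)/(J/I)) = \rho(M/J)$. To handle the BW-radius term, I would note that the quotient map is a contractive homomorphism, hence $\rho(a/I) \le \rho(a)$ for every $a \in A$; consequently $r_n(M/I) \le r_n(M)$ for all $n$ and $r(M/I) \le r(M)$. Substituting back,
\[
\rho(M) = \max\{\rho(M/I),\, r(M)\} = \max\{\rho(M/J),\, r(M/I),\, r(M)\} = \max\{\rho(M/J),\, r(M)\},
\]
which is precisely the BW-formula for $J$, so $J \in BW(A)$.

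The argument is essentially mechanical: two invocations of the hypotheses plus the canonical quotient identification and the elementary monotonicity of $\rho$ and $r$ under quotient homomorphisms. I do not anticipate a genuine obstacle; the only point requiring care is the compatibility of the quotient identifications for $A$, for the ideals $I \subset J$, and for the precompact set $M$, which is standard in the Banach-algebra setting.
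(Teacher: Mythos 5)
Your proof is correct and follows essentially the same route as the paper: the paper proves Lemma \ref{BW-step} "analogously to Lemma \ref{step}", whose proof is exactly your chain — apply the hypothesis for $I$, then the hypothesis for $J/I$ in $A/I$ to the precompact set $M/I$, identify $(A/I)/(J/I)$ with $A/J$, and absorb $r(M/I)$ via $r(M/I)\leq r(M)$.
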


\begin{lemma}
\label{BW-cont} Если $J=\overline{\bigcup J_{\alpha}}%
$, где $(J_{\alpha})$ ---
направленная
по
возрастанию
сеть
замкнутых
идеалов
алгебры $A$, и $J_{\alpha}\in
BW(A)$ для всех $\alpha$,
то $J\in BW(A)$.
\end{lemma}

\begin{proof}
Пусть $M\in\mathfrak{K}(A)$.
Можно
считать, что
$r(M)<\rho(M)$ (в
противном
случае
равенство (\ref{id})
очевидно).
Следовательно,
из $J_{\alpha}\in BW(A)$
вытекает,
что $\rho(M/J_{\alpha})=\rho(M)$ для
всех $\alpha$.
Применяя
лемму \ref{cont},
получим, что
$\rho(M/J)=\lim_{\alpha}\rho(M/J_{\alpha})=\rho(M)$.
\end{proof}

Применяя
леммы \ref{BW-step} и \ref{BW-cont},
получаем по
индукции
следующий результат:

\begin{corollary}
\label{venec} Если
возрастающая
трансфинитная
цепочка
замкнутых
идеалов $(J_{\alpha})_{\alpha
\leq\gamma}$ алгебры $A$
такова, что $J_{0}=0$
и $J_{\alpha+1}/J_{\alpha}\in BW(A/J_{\alpha})$ для
всех $\alpha<\gamma$, то $J_{\gamma}\in
BW(A)$.
\end{corollary}

По теореме
\ref{algGBWF}, все
замкнутые
гипокомпактные
идеалы
являются
BW-идеалами.
Докажем, что
в $BW(A)$ попадают
также все
замкнутые
коммутативные
идеалы. На
деле, для них
равенство (\ref{id})
принимает
более
сильную
форму.
Напомним,
что $r_{1}(M)$
определяется
(см. (\ref{nach2})) как $\sup\{\rho(a):a\in M\}$.

\begin{lemma}
\label{centr} Если $J$ ---
замкнутый
центральный
идеал в $A$ и $M\in\mathfrak{K}%
\left(  A\right)  $, то
\begin{equation}
\rho(M)=\max\{\rho(M/J),r_{1}(\mathrm{L}_{M}|J)\}=\max\{\rho(M/J),r_{1}(M)\}.
\label{central}%
\end{equation}

\end{lemma}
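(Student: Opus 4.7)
The inequality $\rho(M) \geq \max\{\rho(M/J), r_{1}(\mathrm{L}_{M}|J)\}$ is immediate: $\rho(M/J) \leq \rho(M)$ under the quotient map, and $r_{1}(\mathrm{L}_{M}|J) \leq r_{1}(M) \leq \rho(M)$ from $\rho(\mathrm{L}_{a}|J) \leq \rho(\mathrm{L}_{a}) = \rho(a)$ in $A^1$.

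For the second equality, I use the invariant-subspace decomposition of $\mathrm{L}_{a}$: since $J$ is $\mathrm{L}_{a}$-invariant for every $a \in A$, one has $\sigma(\mathrm{L}_{a}) = \sigma(\mathrm{L}_{a}|J) \cup \sigma(\overline{\mathrm{L}}_{a})$ with $\overline{\mathrm{L}}_{a}$ the induced operator on $A/J$, and hence $\rho(a) = \max\{\rho(\mathrm{L}_{a}|J), \rho(a/J)\}$. Taking $\sup_{a \in M}$ gives $r_{1}(M) \leq \max\{r_{1}(\mathrm{L}_{M}|J), \rho(M/J)\}$; combined with the trivial $r_{1}(\mathrm{L}_{M}|J) \leq r_{1}(M)$, this equates the two maxima.

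For the main inequality $\rho(M) \leq \max\{\rho(M/J), r_{1}(M)\}$ I apply Theorem~\ref{rho"} to split $\rho(M) = \max\{\rho^{\chi}(M), r(M)\}$, and bound each piece separately. For $r(M) = \sup_n r_n(M)$, let $a = c_1 \cdots c_n \in M^n$: the identity $\rho(a) = \max\{\rho(\mathrm{L}_{a}|J), \rho(a/J)\}$ bounds the second term by $\rho(M/J)^n$, and the first term is controlled via Gelfand theory on the commutative Banach algebra $J$. Each character $\chi \in \Delta(J)$ extends to a multiplicative linear functional $\tilde\chi$ on $A$ via $\tilde\chi(x) := \chi(xb)/\chi(b)$ for any $b \in J$ with $\chi(b) \neq 0$; well-definedness (independence of $b$) and multiplicativity of $\tilde\chi$ both rest on the centrality identities $xb_1b_2 = xb_2b_1$ and $(xb)(yb) = xyb^2$. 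The Gelfand-type formula $\rho(\mathrm{L}_{x}|J) = \sup_{\chi} |\tilde\chi(x)|$, established modulo $\mathrm{Rad}(J)$ (which is $A$-invariant by the same centrality), combined with the multiplicativity of $\tilde\chi$, yields $\rho(\mathrm{L}_{a}|J) \leq r_{1}(\mathrm{L}_{M}|J)^n$. Therefore $r_n(M) \leq \max\{\rho(M/J), r_{1}(\mathrm{L}_{M}|J)\}$ for every $n$, and the same bound holds for $r(M)$. For $\rho^{\chi}(M)^2 = \rho_{\chi}(\mathrm{L}_{M} \mathrm{R}_{M})$ centrality gives $\mathrm{L}_{a} \mathrm{R}_{b}|J = \mathrm{L}_{ab}|J$ (using $xb = bx$ for $x \in J$), so $(\mathrm{L}_{M} \mathrm{R}_{M})|J = \mathrm{L}_{M^2}|J$ and its $\chi$-contribution is again controlled by the Gelfand argument, while the $A/J$-contribution is dominated by $\rho(M/J)^2$ via~\eqref{restQuot}.

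The main obstacle is the estimate $\rho(\mathrm{L}_{a}|J) \leq r_{1}(\mathrm{L}_{M}|J)^n$ for $a \in M^n$: it does not follow from naive submultiplicativity, since the operators $\mathrm{L}_{c_i}|J$ need not commute (although they all commute with $\mathrm{L}_{J}|J$). The resolution hinges on the Gelfand-type character extension above, which is valid precisely because $J$ is central in $A$; a secondary technical point is verifying the Gelfand formula $\rho(\mathrm{L}_{x}|J) = \sup_\chi |\tilde\chi(x)|$ and the transfer from the semisimple quotient $J/\mathrm{Rad}(J)$ back to $J$ itself.
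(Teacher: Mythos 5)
Your easy inequality and your proof of the second equality are fine, but the core of the argument rests on a misdiagnosis. The ``main obstacle'' you identify does not exist: since $J$ is a \emph{central} ideal, the restrictions $\mathrm{L}_{c}|J$ \emph{do} commute with one another. Indeed, for $x\in J$ and $c_{1},c_{2}\in A$ both $x$ and $c_{2}x$ lie in the centre of $A$, so $c_{1}c_{2}x=c_{1}(c_{2}x)=(c_{2}x)c_{1}=c_{2}(xc_{1})=c_{2}c_{1}x$, i.e.\ $\mathrm{L}_{c_{1}}\mathrm{L}_{c_{2}}=\mathrm{L}_{c_{2}}\mathrm{L}_{c_{1}}$ on $J$ even though $c_{1}c_{2}\neq c_{2}c_{1}$ in $A$. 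Once this is seen, the estimate $\rho(\mathrm{L}_{a}|J)\leq\prod_{i}\rho(\mathrm{L}_{c_{i}}|J)\leq r_{1}(\mathrm{L}_{M}|J)^{n}$ for $a=c_{1}\cdots c_{n}\in M^{n}$ is just submultiplicativity of the spectral radius on commuting elements, and no Gelfand machinery is needed. The machinery you substitute for it is, moreover, unsound: the formula $\rho(\mathrm{L}_{x}|J)=\sup_{\chi}|\tilde{\chi}(x)|$ is false whenever $J$ has nontrivial Jacobson radical, and the ``transfer from $J/\operatorname*{Rad}(J)$ back to $J$'' that you defer as a secondary technicality is exactly where it fails: a radical commutative $J$ has no characters at all, while $\mathrm{L}_{x}|J$ can have positive spectral radius (already $\mathrm{L}_{1}|J=\mathrm{id}_{J}$ when $A$ is the unitization of a radical commutative $J$). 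So, as written, the bound on $r(M)$ is not established.

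The route through Theorem~\ref{rho"} also leaves the $\rho^{\chi}(M)$ term unproved: the inequalities \eqref{restQuot} bound the $\chi$-norms of a restriction and of the induced quotient operator \emph{by} the $\chi$-norm of the whole operator, which is the wrong direction for splitting $\rho_{\chi}(\mathrm{L}_{M}\mathrm{R}_{M})$ into a $J$-part and an $A/J$-part; you would need the converse splitting, which you neither state nor prove. The paper sidesteps all of this with one tool you did not use: the invariant-subspace formula for the joint spectral radius of a \emph{precompact family} of operators (Lemma 4.2 of \cite{ShT2000}), applied to $\mathrm{L}_{M}$ acting on the Banach space $A$ with invariant subspace $J$. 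This gives $\rho(M)=\rho(\mathrm{L}_{M})=\max\{\rho(\mathrm{L}_{M}|J),\rho(M/J)\}$ in a single step, after which commutativity of $\mathrm{L}_{M}|J$ and the identity $\rho(N)=r_{1}(N)$ for precompact commutative families (\cite[Lemma 2.8]{ShT2002}) finish the proof. If you want to salvage your structure, replace the character argument by the commutativity observation for the $r(M)$ term and either prove the $\chi$-splitting across $J$ or switch to the paper's single application of the invariant-subspace formula.
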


\begin{proof}
Cемейство
операторов
$\mathrm{L}_{M}$
действует на
нормированном
пространстве
$A$ и оставляет
инвариантным
замкнутое
подпространство
$J$. По лемме 4.2 \cite{ShT2000}
(которую
легко
адаптировать
к неполным
пространствам),%
\[
\rho(\mathrm{L}_{M})=\max\{\rho(\mathrm{L}_{M}|J),\rho(\mathrm{L}_{M}|\left(
A/J\right)  )\},
\]
где $\mathrm{L}_{M}|J$ ---
сужение $\mathrm{L}_{M}$
на $J$ и $\mathrm{L}_{M}|\left(  A/J\right)  $ ---
семейство,
индуцированное
в факторе $A/J$.
Ясно, что $\mathrm{L}_{M}|\left(
A/J\right)  =\mathrm{L}_{M/J}$, и тогда
$\rho(\mathrm{L}_{M}|\left(  A/J\right)  ))=\rho(M/J)$. На
подпространстве
$J$ операторы
умножения
коммутируют:
$\mathrm{L}_{a}\mathrm{L}_{b}x=abx=bxa=bax=\mathrm{L}_{b}\mathrm{L}_{a}x$.
Тогда $\rho(\mathrm{L}_{M}|J)=r_{1}%
(\mathrm{L}_{M}|J)$ по \cite[лемма
2.8]{ShT2002}. Остаётся
учесть, что
$r_{1}(\mathrm{L}_{M}|J)\leq r_{1}(\mathrm{L}_{M})=r_{1}(M)$ и
$\rho(M)=\rho(\mathrm{L}_{M})$.
\end{proof}

Алгебра $A$
называется
\textit{полупримарной}%
, если она не
имеет
нильпотентных
идеалов.
Эквивалентное
условие: из
равенства $aAa=0$
следует, что $a=0$.

\begin{lemma}
\label{com-id} Если
алгебра $A$
полупримарна,
то любой её
коммутативный
идеал $J$
централен.
Как
следствие,
среди всех
её
коммутативных
идеалов есть наибольший.
\end{lemma}

\begin{proof}
Пусть $x,y\in J$, $a\in A$,
тогда $x[a,y]=xay-xya=yxa-yxa=0$.
Взяв $x=[a,y]b$, где $b\in A$,
получим $[a,y]A[a,y]=0$,
откуда $[a,y]=0$. Это
означает,
что $J$ централен.
\end{proof}

Будем
говорить,
что
подмножество
$K\subset A$
\textit{коммутативно
по модулю}
идеала $J$, если
$ab-ba\in J$ для любых $a,b\in
K$.

\begin{theorem}
\label{commodR}

\emph{(i)} Любая
нормированная
алгебра
имеет
наибольший
идеал $\mathcal{R}_{\mathrm{cq}}^{a}(A)$,
коммутативный
по модулю $\mathcal{R}%
_{\mathrm{cq}}(A)$.

\emph{(ii)} Если
замкнутый
идеал $J$
нормированной
алгебры $A$
коммутативен
по модулю $\mathcal{R}%
_{\mathrm{cq}}(A)$, то
равенство
\emph{(\ref{central})}
выполнено
для любого
$M\in\mathfrak{K}(A)$.
\end{theorem}

\begin{proof}
Так как
любой
нильпотентный
идеал
компактно
квазинильпотентен,
то алгебра
$B=A/\mathcal{R}_{\mathrm{cq}}(A)$
полупримарна.
По лемме \ref{com-id},
она имеет
наибольший
коммутативный
идеал $I(B)$; его
прообраз $\mathcal{R}%
_{\mathrm{cq}}^{a}(A)$ в $A$ ---
наибольший
идеал,
коммутативный
по модулю $R_{\mathrm{cq}}(A)$.

Утверждение
(ii) достаточно
доказать для
$J=\mathcal{R}_{\mathrm{cq}}^{a}(A)$.
Используя
центральность
идеала $I(B)$ и
лемму \ref{centr},
имеем
\begin{align*}
\rho(M)  &  =\rho(M/\mathcal{R}_{\mathrm{cq}}(A))=\max\{\rho_{I(B)}%
(M/\mathcal{R}_{\mathrm{cq}}(A)),r_{1}(M/\mathcal{R}_{\mathrm{cq}}(A))\}\\
&  =\max\{\rho(M/J),r_{1}(M)\},
\end{align*}
где $\rho_{I(B)}\left(  N\right)  $
обозначает
$\rho\left(  N/I(B)\right)  $ для
любого $N\in\mathfrak{K}\left(  B\right)
$.
\end{proof}

Для
банаховых
алгебр
справедливо
и обратное утверждение.

\begin{theorem}
\label{com-mod} Для того,
чтобы в
банаховой
алгебре $A$
выполнялось
условие:
\begin{equation}
\rho(M)=\sup\{\rho(a):a\in M\}\text{ для
всех }M\in\mathfrak{K}(A), \label{r-max}%
\end{equation}
необходимо и
достаточно,
чтобы она
была
коммутативной
по модулю $\mathcal{R}%
_{\mathrm{cq}}(A)$.
\end{theorem}

\begin{proof}
Достаточность
следует из
теоремы \ref{commodR}.

Пусть
выполнено (\ref{r-max}).
Для любых $a,b\in A$,
положим $(\mathrm{ad}~b)a=[a,b]$,
тогда при
любом $\lambda\neq0$
отображение
$\exp(\lambda(\mathrm{ad}~b))$
является
автоморфизмом
алгебры $A$, и
потому $\rho(\exp(\lambda(\mathrm{ad}%
~b))a)=\rho(a)$. Пусть $M\in\mathfrak{K}(A)$.
Из формулы
(\ref{r-max}),
применённой
к множеству
$N=M\cup\{\lambda^{-1}a,\lambda^{-1}\exp(\lambda(\mathrm{ad}~b))a\}$,
получаем,
что
\[
\rho(N)=\max\{\rho(a)/|\lambda|,r_{1}(M)\}.
\]
Обозначим
правую часть
равенства
через $C(\lambda)$.
Так как
 $\frac{1}{2}(\exp
(\lambda(\mathrm{ad}~b))a-a)/\lambda$
принадлежит
абсолютно
выпуклой
оболочке $\mathrm{abs}(N)$
множества $N$ и
$\rho(\mathrm{abs}(N))=\rho(N)$ (см.
\cite[предложение
2.6]{ShT2000}), получим,
что
\[
\rho\left(  \left\{  (\left(  \exp(\lambda(\mathrm{ad}~b))a-a)/\lambda\right)
/2\right\}  \cup M\right)  \leq C(\lambda).
\]
При $\lambda\rightarrow\infty$
верхний
предел
функции $f(\lambda)=\rho\left(
\left\{  \frac{1}{2}(\exp(\lambda(\mathrm{ad}~b))a-a)/\lambda\right\}  \cup
M\right)  $ не
превосходит
числа $r_{1}(M)$.
Будучи
субгармонической
(см. \cite[теорема
3.5]{ShT2000}), эта
функция
постоянна и
везде не
превосходит
$r_{1}(M)$. В
частности,
\[
f(0)=\rho\left(  \left\{  \lbrack a,b]/2\right\}  \cup M\right)  \leq
r_{1}(M)\leq\rho(M).
\]
Так как $M$
произвольно,
это
означает,
что $[a,b]\in\mathcal{R}_{\mathrm{cq}}(A)$ (см.
теорему \ref{cq-rad}(b)).
Следовательно,
$A$
коммутативна
по модулю
идеала $\mathcal{R}_{\mathrm{cq}}(A)$.
\end{proof}

Таким
образом, $BW(A)$
содержит все
замкнутые
идеалы,
которые
получаются
из
бикомпактных,
коммутативных
и компактно
квазинильпотентных
с помощью
последовательного
применения
операции расширения.

По лемме \ref{BW-cont},
всякая
цепочка $BW$%
-идеалов
имеет
мажоранту,
являющуюся
$BW$-идеалом.
Следовательно,
по лемме
Цорна, в
любой
нормированной
алгебре $A$
есть
максимальные
$BW$-идеалы, и
фактор-алгебра
по любому из
этих идеалов
не имеет $BW$%
-идеалов по
лемме \ref{BW-step}.
Неизвестно,
однако,
существует
ли
наибольший
$BW$-идеал.
Эквивалентный
вопрос:
является ли
замыкание
суммы $BW$%
-идеалов $BW$%
-идеалом? Мы
увидим, что
ответы на
эти вопросы
станут
утвердительными,
если вместо
идеалов
рассматривать
радикалы.

\subsection{ От идеалов
к радикалам.}

Скажем, что
ТР $P$ является
$BW$\textit{-ра%
\-%
ди%
\-%
ка%
\-%
лом}, если все
замкнутые $P$%
-радикальные
идеалы
являются $BW$-идеалами.

\begin{theorem}
\label{BW-radical}
Cуществует
наибольший
$BW$-радикал $\mathcal{R}%
_{\mathrm{bw}}$; всякий
топологический
радикал, не
превосходяший
$\mathcal{R}_{\mathrm{bw}}$,
является $BW$-радикалом.
\end{theorem}

\begin{proof}
Обозначим
через $\mathcal{R}_{\mathrm{bw}}$
точную
верхнюю
грань
семейства
всех $BW$%
-радикалов
(см. лемму \ref{sup});
требуется
доказать,
что $\mathcal{R}_{\mathrm{bw}}(A)$
является $BW$%
-идеалом для
любой
нормированной
алгебры $A$.

По лемме \ref{sup},
существует
возрастающая
трансфинитная
цепочка
замкнутых
идеалов $(J_{\alpha})_{\alpha
\leq\gamma}$, такая что
$J_{1}=0$, $J_{\gamma}=\mathcal{R}_{\mathrm{bw}}(A)$ и
каждый
фактор $J_{\alpha+1}/J_{\alpha}$
является $P$%
-радикальным
для
некоторого
$BW$-радикала $P$.
Значит, $J_{\alpha+1}/J_{\alpha}$
является $BW$%
-идеалом в $A/J_{\alpha}$.
По следствию
\ref{venec}, $\mathcal{R}_{\mathrm{bw}}(A)$ --- $BW$-идеал.

Если $P$ --- ТР и $P\leq
\mathcal{R}_{\mathrm{bw}}$, то $P$ --- $BW$%
-радикал в
силу леммы \ref{or}.
\end{proof}

Согласно
теореме, $\rho(M)=\max
\{\rho(M/\mathcal{R}_{\mathrm{bw}}(A)),r(M)\}$ для
любого $M\in\mathfrak{K}\left(  A\right)
$ и любой
алгебры $A$. В
частности,
справедлив
следующий результат.

\begin{corollary}
$\mathcal{R}_{\mathrm{bw}}(A)\cap\operatorname*{Rad}^{r}\left(  A\right)
\subset\mathcal{R}_{\mathrm{cq}}(A)$ для
любой
алгебры $A$.
\end{corollary}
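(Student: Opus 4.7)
The plan is to verify the containment element-wise using the characterization of $\mathcal{R}_{\mathrm{cq}}$ from Theorem~\ref{cq-rad}(b): $a\in\mathcal{R}_{\mathrm{cq}}(A)$ iff $\rho(\{a\}\cup M)=\rho(M)$ for every $M\in\mathfrak{K}(A)$. So I fix $a\in\mathcal{R}_{\mathrm{bw}}(A)\cap\operatorname*{Rad}^{r}(A)$ and an arbitrary $M\in\mathfrak{K}(A)$, and set $N=\{a\}\cup M\in\mathfrak{K}(A)$. It suffices to show $\rho(N)\le\rho(M)$, since the reverse inequality follows from $M\subset N$.

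The two pieces of information about $a$ will bound the two terms in the generalized $BW$-formula $\rho(N)=\max\{\rho(N/\mathcal{R}_{\mathrm{bw}}(A)),\,r(N)\}$, which is applicable here by the remark preceding the corollary (which uses the $BW$-ideal property of $\mathcal{R}_{\mathrm{bw}}$ from Theorem~\ref{BW-radical}). For the first term, since $a\in\mathcal{R}_{\mathrm{bw}}(A)$ the quotient $N/\mathcal{R}_{\mathrm{bw}}(A)$ coincides with $M/\mathcal{R}_{\mathrm{bw}}(A)$, so its joint spectral radius is at most $\rho(M)$. For the second, I exploit $a\in\operatorname*{Rad}^{r}(A)=A\cap\operatorname*{Rad}(\widehat{A})$: in $\widehat{A}$ any product having $a$ among its factors lies in $\operatorname*{Rad}(\widehat{A})$ and therefore has spectral radius zero (after a cyclic rearrangement via $\rho(uv)=\rho(vu)$ if needed, to move $a$ to the end). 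Hence every word in $N^{n}$ is either in $M^{n}$ or is quasinilpotent, whence $r_{n}(N)=r_{n}(M)$ for every $n$ and $r(N)=r(M)\le\rho(M)$.

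Combining these bounds yields $\rho(N)\le\rho(M)$, as required. No serious obstacle is expected: once the $BW$-ideal property of $\mathcal{R}_{\mathrm{bw}}$ is available, the corollary reduces to the elementary fact that elements of $\operatorname*{Rad}^{r}$ kill spectral radii of all products they participate in, combined with the tautology that $\mathcal{R}_{\mathrm{bw}}$-elements vanish modulo $\mathcal{R}_{\mathrm{bw}}$. The only point requiring a moment of care is checking that a "mixed" word in $N^{n}$ really is quasinilpotent; beyond that, the argument is essentially an assembly of Theorem~\ref{BW-radical} and Theorem~\ref{cq-rad}(b).
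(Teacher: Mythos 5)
Your proof is correct and follows essentially the route the paper intends: the corollary is an immediate consequence of the formula $\rho(M)=\max\{\rho(M/\mathcal{R}_{\mathrm{bw}}(A)),r(M)\}$ stated just before it, with membership in $\mathcal{R}_{\mathrm{bw}}(A)$ killing the quotient term and membership in $\operatorname*{Rad}^{r}(A)=A\cap\operatorname*{Rad}(\widehat{A})$ killing the $r$-term. The only (harmless) variation is that you check the inclusion element-wise via Theorem~\ref{cq-rad}(b) applied to $\{a\}\cup M$, whereas the shortest reading of the paper applies the formula directly to an arbitrary $M\in\mathfrak{K}\left(\mathcal{R}_{\mathrm{bw}}(A)\cap\operatorname*{Rad}^{r}(A)\right)$ to conclude that this ideal is compactly quasinilpotent and then invokes the maximality from Theorem~\ref{cq-rad}(a).
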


В силу (\ref{alg}),
регулярный
гипокомпактный
радикал $\mathcal{R}%
_{\mathrm{hc}}^{r}$ --- $BW$%
-радикал.
Покажем, что
отображение
$A\longmapsto\mathcal{R}_{\mathrm{cq}}^{a}(A)$ ---
также $BW$-радикал.

\begin{lemma}
\label{indiv} $\mathcal{R}_{\mathrm{cq}}^{a}(A)=\{x\in A:x[a,b]\in
\mathcal{R}_{\mathrm{cq}}(A)\text{ для
всех }a,b\in A\}.$
\end{lemma}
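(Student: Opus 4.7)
The plan is to work in $B=A/\mathcal{R}_{\mathrm{cq}}(A)$, which is semiprime, with canonical quotient map $q\colon A\to B$. By the construction in Theorem~\ref{commodR}(i), $\mathcal{R}_{\mathrm{cq}}^{a}(A)=q^{-1}(I(B))$, where $I(B)$ denotes the largest commutative ideal of $B$. Setting
$$T_{B}=\{y\in B:y[u,v]=0\text{ for all }u,v\in B\},$$
the lemma is equivalent to proving the equality $T_{B}=I(B)$ inside $B$, since $q(x)\in T_{B}$ iff $x[a,b]\in\ker q=\mathcal{R}_{\mathrm{cq}}(A)$ for all $a,b\in A$.

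For $I(B)\subseteq T_{B}$ I would apply Lemma~\ref{com-id}: since $B$ is semiprime, $I(B)$ is central in $B$. Hence for $y\in I(B)$ and arbitrary $u,v\in B$, the centrality of both $y$ and $yv\in I(B)$ gives $yuv=(uy)v=u(yv)=(yv)u=yvu$, so that $y[u,v]=0$.

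For $T_{B}\subseteq I(B)$ I would first check that $T_{B}$ is a closed two-sided ideal of $B$. Closure under left multiplication is immediate from $(cy)[u,v]=c(y[u,v])=0$; closure under right multiplication follows from the derivation identity $c[u,v]=[cu,v]-[c,v]u$, which yields $(yc)[u,v]=y[cu,v]-(y[c,v])u=0$. The crux of the proof is showing that $T_{B}$ is commutative, since then $T_{B}\subseteq I(B)$ by maximality. For $y_{1},y_{2}\in T_{B}$ and any $c\in B$, I would use the identity $y[\alpha,y]=0$, which rephrases as $y\alpha y=y^{2}\alpha$ for any $y\in T_{B}$ and any $\alpha\in B$. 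Iterated application of this identity collapses each of the four terms in the expansion
$$[y_{1},y_{2}]\,c\,[y_{1},y_{2}]=y_{1}y_{2}cy_{1}y_{2}-y_{1}y_{2}cy_{2}y_{1}-y_{2}y_{1}cy_{1}y_{2}+y_{2}y_{1}cy_{2}y_{1}$$
to either $y_{1}^{2}y_{2}^{2}c$ or $y_{2}^{2}y_{1}^{2}c$, and the four resulting terms cancel in pairs. By semiprimeness of $B$, $[y_{1},y_{2}]\,B\,[y_{1},y_{2}]=0$ forces $[y_{1},y_{2}]=0$, proving commutativity.

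The main obstacle is the commutativity argument for $T_{B}$: the inclusion $I(B)\subseteq T_{B}$ reduces cleanly to the centrality statement of Lemma~\ref{com-id}, and the ideal property of $T_{B}$ is routine, but commutativity hinges on the explicit algebraic cancellation above combined with semiprimeness. Once $T_{B}=I(B)$ is established, applying $q^{-1}$ yields the claimed description of $\mathcal{R}_{\mathrm{cq}}^{a}(A)$.
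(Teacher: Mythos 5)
Your argument is correct, and its skeleton coincides with the paper's: pass to the semiprime quotient $B=A/\mathcal{R}_{\mathrm{cq}}(A)$, identify the right-hand side as $q^{-1}(T_B)$, get $I(B)\subseteq T_B$ from the centrality statement of Lemma~\ref{com-id}, and check that $T_B$ is an ideal. Where you genuinely diverge is the crux inclusion $T_B\subseteq I(B)$: you prove only that $T_B$ is \emph{commutative}, via the identity $y\alpha y=y^{2}\alpha$ and the four-term cancellation in $[y_1,y_2]\,c\,[y_1,y_2]$ (which I checked: terms one and two both collapse to $y_1^{2}y_2^{2}c$ with opposite signs, terms three and four to $y_2^{2}y_1^{2}c$), and then appeal to maximality of $I(B)$. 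The paper instead proves the stronger fact that every element $x$ of the right-hand side is \emph{central} modulo $\mathcal{R}_{\mathrm{cq}}(A)$, in one line: since the right-hand side is an ideal, $[x,a]b$ lies in it, and applying its defining property with the commutator $[x,a]$ itself in the role of $[a,b]$ gives $[x,a]b[x,a]\in\mathcal{R}_{\mathrm{cq}}(A)$ for all $b$, whence $[x,a]\in\mathcal{R}_{\mathrm{cq}}(A)$ by semiprimeness. The paper's trick buys brevity and the sharper conclusion (centrality, which is in any case what Lemma~\ref{com-id} says all of $I(B)$ enjoys); your computation is longer and yields only commutativity, but that is all the maximality argument needs, so the proof stands.
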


\begin{proof}
Пусть $P(A)$ ---
правая часть
доказываемого
равенства.
Легко
видеть, что $P(A)$ ---
идеал в $A$. Если
$x\in P(A)$, $a,b\in A$, то $[x,a]b[x,a]\in\mathcal{R}%
_{\mathrm{cq}}(A)$
(поскольку $[x,a]b\in
P(A)$), то есть,
образ
элемента $[x,a]$ в
$A/\mathcal{R}_{\mathrm{cq}}(A)$
порождает
нильпотентный
идеал. Так
как $A/\mathcal{R}_{\mathrm{cq}}(A)$
полупримарна,
то этот
образ равен
нулю и тогда $x$
централен по
модулю $\mathcal{R}_{\mathrm{cq}}(A)$.
Следовательно,
$P(A)\subset\mathcal{R}_{\mathrm{cq}}^{a}(A)$.

Обратно,
пусть $x\in\mathcal{R}_{\mathrm{cq}}^{a}(A)$.
Так как $x$
централeн по
модулю $\mathcal{R}_{\mathrm{cq}}(A)$,
то $x[a,b]=[xa,b]-[x,b]a\in\mathcal{R}_{\mathrm{cq}}(A)$
для любых $a,b\in A$,
откуда $\mathcal{R}_{\mathrm{cq}}%
^{a}(A)\subset P(A)$.
\end{proof}

Из этой
леммы и
теоремы \ref{cq-rad}(d)
сразу
вытекает утверждение:

\begin{corollary}
\label{acq} $\mathcal{R}_{\mathrm{cq}}^{a}(B )=B\cap\mathcal{R}%
_{\mathrm{cq}}^{a}(A)$ для
любой
плотной
подалгебры
$B $
нормированной
алгебры $A$.
\end{corollary}

\begin{theorem}
\label{rcrad}
Отображение
$A\longmapsto\mathcal{R}_{\mathrm{cq}}^{a}(A)$ ---
наследственный
топологический
радикал.
\end{theorem}

\begin{proof}
Мы должны
проверить
условия (R1), (R2) и (R5)
для $\mathcal{R}_{\mathrm{cq}}^{a}$,
причём
достаточно
это сделать
для
банаховых
алгебр. В
самом деле,
если
показать,
что $\mathcal{R}_{\mathrm{cq}}^{a}$ ---
НТР в классе
банаховых
алгебр, то
регулярное
расширение
$\mathcal{R}_{\mathrm{cq}}^{ar}$ --- НТР в
силу
\cite[теорема 2.21]{rad1}.
Но $\mathcal{R}_{\mathrm{cq}}^{a}=\mathcal{R}_{\mathrm{cq}}^{ar}$
по следствию
\ref{acq}.

Если $f:A\rightarrow B$ ---
непрерывный
эпиморфизм
банаховых
алгебр, то,
полагая $J=f(R_{cq}^{a}(A))$,
получим $[J,J]\subset f([\mathcal{R}%
_{\mathrm{cq}}^{a}(A),\mathcal{R}_{\mathrm{cq}}^{a}(A)])\subset f(\mathcal{R}%
_{\mathrm{cq}}(A))\subset\mathcal{R}_{\mathrm{cq}}(B)$, то
есть, идеал $J$
коммутативен
по модулю $\mathcal{R}%
_{\mathrm{cq}}(B)$. Значит, $f(\mathcal{R}%
_{\mathrm{cq}}^{a}(A))\subset\mathcal{R}_{\mathrm{cq}}^{a}(B)$, то
есть,
выполнено
условие (R1).

Пусть $I=\mathcal{R}_{\mathrm{cq}}^{a}(A)$,
$B=A/I$, $K=\{x\in A:q_{I}(x)\in\mathcal{R}_{\mathrm{cq}}^{a}(B)\}$;
требуется
доказать,
что $K\subset I$. Ясно,
что (\ref{central})
справедливо
при $J=I$ и
выполнено в
алгебре $B$ при
$J=K/I$. Рассуждая
как в лемме
\ref{BW-step} (то есть,
как в лемме \ref{step}),
получим, что
оно
выполнено
при $J=K$, откуда
$\rho(M)=r_{1}(M)$ при $M\in\mathfrak{K}\left(  K\right)  $.
По теореме
\ref{com-mod}, идеал $K$
коммутативен
по модулю $\mathcal{R}%
_{\mathrm{cq}}(K)=K\cap\mathcal{R}_{\mathrm{cq}}(A)$, а
значит, и по
модулю $\mathcal{R}_{\mathrm{cq}}(A)$,
то есть, $K\subset\mathcal{R}%
_{\mathrm{cq}}^{a}(A)=I$. Условие
(R2) доказано.

Пусть $I$ ---
замкнутый
идеал
банаховой
алгебры $A$. Из
леммы \ref{indiv}
следует, что
$\mathcal{R}_{\mathrm{cq}}^{a}(I)$
инвариантен
относительно
непрерывных
автоморфизмов
алгебры $I$, а
потому и
относительно
её
ограниченных
дифференцирований
(поскольку
они ---
генераторы
групп
автоморфизмов).
Следовательно,
если $x\in\mathcal{R}_{\mathrm{cq}}^{a}(I)$, $a\in A$,
то $[a,x]\in\mathcal{R}_{\mathrm{cq}}^{a}(I)$. С
другой
стороны, из
леммы \ref{indiv} (и
того, что $\mathcal{R}_{\mathrm{cq}%
}(I)$ --- идеал в $A$)
следует, что
$ax[v,w]\in\mathcal{R}_{\mathrm{cq}}(I)$ для
любых $v,w\in I$,
поэтому $ax\in\mathcal{R}%
_{\mathrm{cq}}^{a}(I)$. Отсюда
$xa\in\mathcal{R}_{\mathrm{cq}}^{a}(I)$. Мы
доказали,
что $\mathcal{R}_{\mathrm{cq}}^{a}(I)$ ---
идеал в $A$.

Так как $\mathcal{R}_{\mathrm{cq}%
}(I)\subset\mathcal{R}_{\mathrm{cq}}(A)$ и
идеал $\mathcal{R}_{\mathrm{cq}}^{a}(I)$
коммутативен
по модулю $\mathcal{R}%
_{\mathrm{cq}}(I)$, то он
коммутативен
и по модулю
$\mathcal{R}_{\mathrm{cq}}(A)$, а
значит,
содержится в
$\mathcal{R}_{\mathrm{cq}}^{a}(A)$. Таким
образом, $\mathcal{R}_{\mathrm{cq}%
}^{a}(I)\subset I\cap\mathcal{R}_{\mathrm{cq}}^{a}(A)$. С
другой
стороны,
если $x\in I\cap\mathcal{R}_{\mathrm{cq}}^{a}(A)$,
то для любых
$a,b\in I$, по лемме \ref{indiv},
$x[a,b]\subset I\cap\mathcal{R}_{\mathrm{cq}}(A)=\mathcal{R}_{\mathrm{cq}}(I)$
и,
следовательно,
$x\in\mathcal{R}_{\mathrm{cq}}^{a}(I)$. Это
доказывает
включение $I\cap
\mathcal{R}_{\mathrm{cq}}^{a}(A)\subset\mathcal{R}_{\mathrm{cq}}^{a}(I)$. (R5) доказано.
\end{proof}

\begin{corollary}
Радикал $\mathcal{R}_{\mathrm{cq}%
}^{a}$ равномерен
и является $BW$-радикалом.
\end{corollary}

\begin{proof}
Для
равномерности
достаточно
показать в
силу
следствия \ref{acq},
что любая
замкнутая
подалгебра
$\mathcal{R}_{\mathrm{cq}}^{a}$-ра%
\-%
ди%
\-%
каль%
\-%
ной
банаховой
алгебры $\mathcal{R}_{\mathrm{cq}%
}^{a}$-радикальна.
Но это
очевидно в
силу теоремы
\ref{com-mod}.

Далее, $\mathcal{R}_{\mathrm{cq}}^{a}$
является $BW$%
-радикалом
согласно
теоремам \ref{commodR} и
\ref{rcrad}.
\end{proof}

Обозначим
радикал $\mathcal{R}_{\mathrm{hc}%
}^{r}\vee\mathcal{R}_{\mathrm{cq}}^{a}$ через
$\mathcal{R}_{\mathrm{hc}^{r}+\mathrm{cq}^{a}}$. Ясно,
что он
отличен от
$\mathcal{R}_{\mathrm{hc}}^{r}$ и $\mathcal{R}_{\mathrm{cq}}^{a}$,
и что $\mathcal{R}_{\mathrm{hc}^{r}+\mathrm{cq}^{a}%
}\leq\mathcal{R}_{\mathrm{bw}}$.
Следующий
результат
вытекает из
теоремы \ref{BW-radical}.

\begin{corollary}
Радикал $\mathcal{R}_{\mathrm{hc}%
^{r}+\mathrm{cq}^{a}}$
является $BW$-радикалом.
\end{corollary}

\begin{corollary}
\label{racl}Замыкание
$\mathcal{R}_{\mathrm{hc}^{r}+\mathrm{cq}^{a}}$%
-радикальной
подалгебры
$\mathcal{R}_{\mathrm{hc}^{r}+\mathrm{cq}^{a}}$-ра\textit{%
\-%
}ди\textit{%
\-%
}каль\textit{%
\-%
}но.
\end{corollary}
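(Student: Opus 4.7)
The plan is to combine two ingredients already established in the paper: the fact that $\mathcal{R}_{\mathrm{hc}^{r}+\mathrm{cq}^{a}}$ is a topological radical, and the general stability of $P$-radical algebras under extensions that holds for any topological preradical $P$. First, I would observe that by definition $\mathcal{R}_{\mathrm{hc}^{r}+\mathrm{cq}^{a}}=\mathcal{R}_{\mathrm{hc}}^{r}\vee\mathcal{R}_{\mathrm{cq}}^{a}$ is the supremum of two topological radicals ($\mathcal{R}_{\mathrm{hc}}^{r}$ being the regularization of the preradical $\mathcal{R}_{\mathrm{hc}}$ from Theorem \ref{thcrad}, and $\mathcal{R}_{\mathrm{cq}}^{a}$ being a topological radical by Theorem \ref{rcrad}), and is therefore itself a topological radical by Lemma \ref{sup}(a).

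Next, given a closed ideal $J$ of a topological algebra $A$ with both $J$ and $A/J$ being $\mathcal{R}_{\mathrm{hc}^{r}+\mathrm{cq}^{a}}$-radical, I would apply Lemma \ref{transext} to the length-one transfinite chain $J_{0}=J\subset J_{1}=A$. Its start $J_{0}=J$ is $\mathcal{R}_{\mathrm{hc}^{r}+\mathrm{cq}^{a}}$-radical by assumption, and the unique successor quotient $J_{1}/J_{0}=A/J$ is $\mathcal{R}_{\mathrm{hc}^{r}+\mathrm{cq}^{a}}$-radical as well. Lemma \ref{transext} then delivers that $J_{1}=A$ is $\mathcal{R}_{\mathrm{hc}^{r}+\mathrm{cq}^{a}}$-radical, which is the claim.

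I do not anticipate any serious obstacle here: the corollary is in effect an instance of the general principle, recorded in the discussion preceding Lemma \ref{transext}, that for every topological preradical $P$ the class of $P$-radical algebras is closed under extensions. The only point worth a quick verification is that Lemma \ref{transext} is genuinely applicable to $\mathcal{R}_{\mathrm{hc}^{r}+\mathrm{cq}^{a}}$, and this is precisely what the first step secures.
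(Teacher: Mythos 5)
Your proof is correct, and it reaches the conclusion by a slightly more direct route than the paper. You apply Lemma \ref{transext} (equivalently, the extension-stability principle for $P$-radical algebras recorded just before it) to the two-term chain $J_{0}=J\subset J_{1}=A$, which is legitimate once one knows that $\mathcal{R}_{\mathrm{hc}^{r}+\mathrm{cq}^{a}}=\mathcal{R}_{\mathrm{hc}}^{r}\vee\mathcal{R}_{\mathrm{cq}}^{a}$ is itself a topological (pre)radical --- and that is exactly Lemma \ref{sup}(a). The paper instead invokes Lemma \ref{sup}(b): it refines each of the two radical pieces of the extension into a transfinite chain of closed ideals whose successive quotients are radical with respect to one of the two constituents $\mathcal{R}_{\mathrm{hc}}^{r}$ or $\mathcal{R}_{\mathrm{cq}}^{a}$ separately, concatenates the two chains, and only then applies Lemma \ref{transext}. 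Both arguments rest on the same final lemma; yours needs less machinery (only part (a) of Lemma \ref{sup}), while the paper's version yields the extra structural information that the extension algebra carries a chain with gaps radical for one of the two specific constituent radicals --- precisely the kind of decomposition exploited later, e.g.\ in the proof of Theorem \ref{ap}. I see no gap in your argument: for the two-term chain the continuity condition at limit ordinals in Lemma \ref{transext} is vacuous, and the hypotheses (the first link and the single quotient are radical) are exactly your assumptions.
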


\begin{proof}
Заметим, что
доказываемым
свойством
обладают
радикалы $\mathcal{R}%
_{\mathrm{hc}}^{r}$ и $\mathcal{R}_{\mathrm{cq}}^{a}$.
Поэтому из
леммы \ref{sup}
следует, что
замыкание $B$
произвольной
$\mathcal{R}_{\mathrm{hc}^{r}+\mathrm{cq}^{a}}$%
-радикальной
подалгебры
обладает
цепочкой
замкнутых
идеалов $\left(  J_{\alpha}\right)
_{\alpha\leq\gamma}$, таких
что $J_{0}=0$, $J_{\gamma}=B$ и
каждый
фактор $J_{\alpha+1}/J_{\alpha}$
либо $\mathcal{R}_{\mathrm{hc}^{r}}$%
-радикален,
либо $\mathcal{R}_{\mathrm{cq}^{a}}$%
-радикален, а
значит, $\mathcal{R}_{\mathrm{hc}%
^{r}+\mathrm{cq}^{a}}$%
-радикален.
Тогда
алгебра $B$ $\mathcal{R}%
_{\mathrm{hc}^{r}+\mathrm{cq}^{a}}$%
-радикальна
по лемме \ref{transext}.
\end{proof}

\section{Роль
БВ-формул в
вопросе о
непрерывности
ССР}

В
\cite[предложение
3.1]{ShT2000} было
показано,
что ССР ---
полунепрерывная
сверху
функция
ограниченного
подмножества
нормированной
алгебры. Это
означает,
что $\underset{n\rightarrow\infty}{\limsup\,\,}\rho
(M_{n})\leq\rho(M)$, если
последовательность
ограниченных
подмножеств
$M_{n}$ стремится
к $M$ в смысле
расстояния
Хаусдорфа.
Скажем, что
ССР
\textit{непрерывен
в} $M\subset A$, если $\rho(M_{n})\rightarrow
\rho(M)$, когда $M_{n}$
стремится к $M$.

Следующее
утверждение очевидно.

\begin{lemma}
\label{app}Пусть $M \subset A$
ограничено и
$J$ --- замкнутый
идеал в $A $.
Если $\rho(M)=$ $\rho(M/J)$ и
ССР
непрерывен в
$M/J$, то ССР
непрерывен в
$M $.
\end{lemma}

Хорошо
известно,
что оператор
$T$ --- точка
непрерывности
спектрального
радиуса (в
обычном
смысле), если
$\rho_{e}(T)<\rho(T)$, а также
что
спектральный
радиус
непрерывен
на
коммутативных
нормированных
алгебрах. Мы
получим
сейчас
аналогичные
результаты
для ССР.

\begin{lemma}
\label{appl} Пусть $M\in\mathfrak{K}\left(
A\right)  $. ССР
непрерывен в
$M,$ если
выполнено
хотя бы одно
из условий:

\begin{enumerate}
\item[(i)] $A=\mathcal{B}\left(  \mathcal{X}\right)  $ и $\rho_{\chi
}(M)<\rho(M)$;

\item[(ii)]
существует
замкнутый
центральный
идеал $J$
алгебры $A$,
такой что $\rho(M/J)<\rho(M)$.
\end{enumerate}
\end{lemma}

\begin{proof}
Пусть $M_{n}\rightarrow M$.
Покажем, что
$\underset{n\rightarrow\infty}{\liminf\,}\rho(M_{n})\geq\rho(M)$.

(i) Допустим
противное.
Умножая на
константу и
заменяя
последовательность
$\left\{  M_{n}\right\}  $ её
подпоследовательностью,
мы можем
считать, что
$\rho_{\chi}(M)<1$ и $\lim\rho(M_{n})<1<\rho(M).$ По
теореме \ref{top}, $\rho(M)=r(M).$
Следовательно,
$\sup\{\rho(T):T\in M^{k}\}>1$ при
некотором $k$, и
найдется $T\in M^{k}$,
такой что $\rho(T)>1.$
Так как $\rho_{\chi}(M)<1$, то
$\Vert M^{n}\Vert_{\chi}<1$ при
достаточно
больших $n$ и
тогда $\Vert T^{n}\Vert_{\chi}\leq\Vert
M^{nk}\Vert_{\chi}<1$, откуда $\rho_{\chi
}(T)\leq1.$ Поскольку
$\rho_{\chi}\left(  T\right)  =\rho_{e}\left(  T\right)  $ (см.
\cite[следствие 6.4]%
{LS}), получим,
что $\rho_{e}(T)<\rho(T).$ По
условию,
найдутся
такие $T_{n}\in M_{n}^{k}$,
что $T_{n}\rightarrow T$ и,
следовательно,
$\rho(T_{n})\rightarrow\rho(T)$ при $n\rightarrow\infty$.
Но это
невозможно,
поскольку $\rho
(T_{n})\leq\rho(M_{n}^{k})=\rho(M_{n})^{k}$ и $\lim_{n}\rho(M_{n}%
)^{k}<1$.

(ii) Согласно (\ref{central}),
$\rho(M)=r_{1}\left(  \mathrm{L}_{M}|J\right)  $. Так
как
операторы
левого
умножения,
суженные на $J$,
образуют
коммутативную
алгебру и $\mathrm{L}_{M_{n}%
}|J$ стремится к
$\mathrm{L}_{M}|J$, то из
\cite[лемма 2.7]{ShT2002}
вытекает,
что $r_{1}\left(  \mathrm{L}_{M_{n}}|J\right)  \rightarrow
r_{1}\left(  \mathrm{L}_{M}|J\right)  $ при $n\rightarrow
\infty$. Поскольку
$\rho(M_{n})\geq r_{1}\left(  \mathrm{L}_{M_{n}}|J\right)  $
для любого $n$,
получим $\underset{n\rightarrow
\infty}{\liminf\,}\rho(M_{n})\geq\rho(M)$.
\end{proof}

\begin{theorem}
\label{ap}Пусть $M$ $\in\mathfrak{K}\left(
A\right)  $. Если $\rho(M/\mathcal{R}_{\mathrm{hc}%
^{r}+\mathrm{cq}^{a}}\left(  A\right)  )<\rho(M)$, то
ССР
непрерывен в
$M $ и $\rho(M)=\sup\{\rho(K):K\subset M$
конечно$\}$.
\end{theorem}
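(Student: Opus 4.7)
The plan is to derive both conclusions from the fact that $\mathcal{R}_{\mathrm{hc}^{r}+\mathrm{cq}^{a}}$ is a $BW$-radical, combined with Lemmas \ref{app} and \ref{appl} and a transfinite induction.

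The finite-set identity follows quickly. Since $\mathcal{R}_{\mathrm{hc}^{r}+\mathrm{cq}^{a}}$ is a $BW$-radical, identity (\ref{id}) with $J=\mathcal{R}_{\mathrm{hc}^{r}+\mathrm{cq}^{a}}(A)$ gives $\rho(M)=\max\{\rho(M/J),r(M)\}$, so the hypothesis forces $r(M)=\rho(M)$. Given $\varepsilon>0$, by definition of $r$ pick $n$ and $a=a_{1}\cdots a_{n}\in M^{n}$ with $\rho(a)^{1/n}>r(M)-\varepsilon$; the finite set $K=\{a_{1},\dots,a_{n}\}\subset M$ satisfies $\rho(K)\geq r_{n}(K)\geq\rho(a)^{1/n}>\rho(M)-\varepsilon$, while $\rho(K)\leq\rho(M)$ is automatic.

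For approximability I would set up a transfinite induction along the chain supplied by Lemma \ref{sup}(b) applied to $\mathcal{R}_{\mathrm{hc}^{r}+\mathrm{cq}^{a}}=\mathcal{R}_{\mathrm{hc}}^{r}\vee\mathcal{R}_{\mathrm{cq}}^{a}$: there is an increasing continuous chain $(J_{\alpha})_{\alpha\leq\gamma}$ of closed ideals with $J_{0}=0$, $J_{\gamma}=\mathcal{R}_{\mathrm{hc}^{r}+\mathrm{cq}^{a}}(A)$, and each $J_{\alpha+1}/J_{\alpha}$ either $\mathcal{R}_{\mathrm{hc}}^{r}$-radical or $\mathcal{R}_{\mathrm{cq}}^{a}$-radical in $A/J_{\alpha}$. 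The map $\alpha\mapsto\rho(M/J_{\alpha})$ is non-increasing, and by Lemma \ref{cont} it is continuous at limit ordinals; since it starts at $\rho(M)$ and ends strictly below $\rho(M)$, the smallest ordinal $\alpha_{0}$ at which the value drops must be a successor, say $\alpha_{0}=\alpha+1$, and then $\rho(M/J_{\alpha})=\rho(M)$ while $\rho(M/J_{\alpha+1})<\rho(M)$.

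By Lemma \ref{app}, it now suffices to show that the image $N$ of $M$ in $B=A/J_{\alpha}$ is approximable in $B$. In $B$ the ideal $K=J_{\alpha+1}/J_{\alpha}$ is either $\mathcal{R}_{\mathrm{hc}}^{r}$-radical or $\mathcal{R}_{\mathrm{cq}}^{a}$-radical, and $\rho(N/K)<\rho(N)$. In the $\mathcal{R}_{\mathrm{cq}}^{a}$-radical case, Theorem \ref{commodR} tells us that $K$ is commutative modulo $\mathcal{R}_{\mathrm{cq}}(K)$; since $\mathcal{R}_{\mathrm{cq}}$-layers do not change $\rho$ (Theorem \ref{cq-rad}(c)), a further refinement of the chain inside $K$ produces a commutative successor layer across which the spectral radius strictly drops, and Lemma \ref{appl}(ii) delivers approximability. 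In the $\mathcal{R}_{\mathrm{hc}}^{r}$-radical case one uses that every $a\in K$ has compact multiplication operator $\mathrm{W}_{a}$ and exploits this, via a suitable regular representation of $B$ on an associated Banach space, to reduce to the hypothesis $\rho_{\chi}(N)<\rho(N)$ of Lemma \ref{appl}(i).

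The main obstacle I anticipate is the $\mathcal{R}_{\mathrm{hc}}^{r}$-radical step: converting the algebraic compactness of the operators $\mathrm{W}_{a}$ ($a\in K$) into a genuine $\chi$-norm bound for the family $N$ requires a careful choice of representation and of the ambient Banach space. The $\mathcal{R}_{\mathrm{cq}}^{a}$ side is more routine once the chain inside $K$ has been refined into $\mathcal{R}_{\mathrm{cq}}$-radical and commutative layers.
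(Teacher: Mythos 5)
Your skeleton matches the paper's: transfinite induction along the chain from Lemma \ref{sup}(b), continuity at limit ordinals via Lemma \ref{cont} to locate a successor step $\rho(M/J_{\alpha})=\rho(M)>\rho(M/J_{\alpha+1})$, reduction to $A/J_{\alpha}$ by Lemma \ref{app}, exclusion of the compactly quasinilpotent case by Theorem \ref{cq-rad}(c), and Lemma \ref{appl}(ii) for the commutative layer. Your derivation of $\rho(M)=\sup\{\rho(K):K\subset M\ \text{finite}\}$ from $r(M)=\rho(M)$ (which follows since $\mathcal{R}_{\mathrm{hc}^{r}+\mathrm{cq}^{a}}$ is a $BW$-radical) is correct and in fact more direct than the paper, which obtains this only as a byproduct of approximability plus precompactness.

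However, the hypocompact case --- which you yourself flag as ``the main obstacle'' --- is a genuine gap, and it is the heart of the theorem. The paper's resolution has three ingredients you did not find. First, the hypocompact layer must be refined into \emph{bicompact} layers via Proposition \ref{phc1}(iii), so that the relevant ideal $J=J_{\alpha+1}/J_{\alpha}$ consists entirely of elements acting compactly by two-sided multiplication; hypocompactness alone does not give compactness of $\mathrm{W}_{a}$ for every $a\in K$ as you assume. Second, no ``careful choice of representation'' is needed: one takes $N=\mathrm{L}_{M'}\mathrm{R}_{M'}$ acting on the Banach space $B=A/J_{\alpha}$ itself, and the required $\chi$-bound is exactly Lemma \ref{L6}, which gives $\rho_{\chi}(N)\leq\rho_{e}(N)\leq\rho(M'/J)\rho(M')<\rho(M')^{2}=\rho(N)$ (the last equality by Lemma \ref{pass}), so Lemma \ref{appl}(i) applies to $N$. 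Third --- and this is the step your reduction scheme misses entirely --- approximability of the JSR at $N$ does not formally transfer to $M'$ through Lemma \ref{app}; one must argue directly that for $M_{n}\to M$ the sets $N_{n}=\mathrm{L}_{M_{n}/J_{\alpha}}\mathrm{R}_{M_{n}/J_{\alpha}}$ converge to $N$ and
\[
\rho(M_{n})^{2}\geq\rho(M_{n}/J_{\alpha})^{2}=\rho(N_{n})\rightarrow\rho(N)=\rho(M)^{2},
\]
whence $\liminf\rho(M_{n})\geq\rho(M)$. Without Lemma \ref{L6} and this squaring/transfer argument your proof does not close.
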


\begin{proof}
По следствию
\ref{racl}, можно
считать, что $A$
--- банахова
алгебра.
Согласно
лемме \ref{sup},
найдётся
цепочка $\left(  J_{\alpha}\right)
_{\alpha\leq\gamma}$
замкнутых
идеалов,
такая что $J_{0}=0$,
$J_{\gamma}=\mathcal{R}_{\mathrm{hc}^{r}+\mathrm{cq}^{a}}\left(  A\right)  $
и все
факторы $J_{\alpha+1}/J_{\alpha}$
либо
гипокомпактны,
либо
центральны
по модулю
компактно
квазинильпотентного
радикала.
Расширяя,
если
необходимо,
цепочку за
счёт
цепочек,
соответствующих
этим
факторам, мы
можем
считать, что
все $J_{\alpha+1}/J_{\alpha}$ либо
центральны,
либо
компактно
квазинильпотентны,
либо
бикомпактны
(по
предложению
\ref{phc1}).

Найдем
первый
ординал $\beta$,
для которого
$\rho(M/J_{\beta})<\rho(M)$. Тогда $\beta$
не может
быть
предельным
по лемме \ref{cont}.
Иными
словами,
существует
ординал $\alpha$,
что $\rho(M)=\rho(M/J_{\alpha})>\rho(M/J_{\alpha+1})$.
Пусть $M^{\prime}=M/J_{\alpha}$ и
$J=J_{\alpha+1}/J_{\alpha}$. Тогда $J$
не является
компактно
квазинильпотентным
по теореме \ref{cq-rad}.
Если $J$
централен,
то ССР
непрерывен в
$M^{\prime}$ по лемме \ref{appl}
и потому --- в $M$
по лемме \ref{app}.

Остаётся
рассмотреть
случай,
когда $J$
бикомпактен.
Пусть $N=\mathrm{L}_{M^{\prime}}\mathrm{R}%
_{M^{\prime}}$. Тогда $N$ ---
предкомпактное
множество
операторов.
Так как
\[
\rho_{\chi}\left(  N\right)  \leq\rho_{e}\left(  N\right)  \leq\rho(M^{\prime
}/J)\rho(M^{\prime})<\rho(M^{\prime})^{2}=\rho(N)
\]
в силу лемм \ref{L6}
и \ref{pass}, то ССР
непрерывен в
$N$ по лемме \ref{appl}.
Пусть теперь
последовательность
ограниченных
множеств $M_{n}$
стремится к $M$.
Положим $N_{n}=\mathrm{L}%
_{M_{n}/J_{\alpha}}\mathrm{R}_{M_{n}/J_{\alpha}}$.
Тогда $N_{n}$
стремится к $N$
и
\[
\rho(M_{n})^{2}\geq\rho(M_{n}/J_{\alpha})^{2}=\rho\left(  N_{n}\right)
\rightarrow\rho(N)=\rho(M)^{2}.
\]
Следовательно,
$\lim\inf\rho(M_{n})\geq\rho(M)$ и ССР
непрерывен в
$M$. В качестве
$\left\{  M_{n}\right\}  $ можно
взять
последовательность
конечных
подмножеств,
стремящуюся
к $M$. Поэтому $\rho
(M)=\sup\rho(K)$, где $K$
пробегает
конечные
подмножества
в $M$.
\end{proof}

\begin{corollary}
Если $G$ ---
полугруппа
квазинильпотентных
элементов
нормированной
алгебры $A$ и $G\subset
\mathcal{R}_{\mathrm{hc}^{r}+\mathrm{cq}^{a}}\left(  A\right)  $,
то алгебра
$\operatorname*{span}G$
компактно квазинильпотентна.
\end{corollary}

\begin{proof}
Для каждого
конечного
подмножества
$N$ линейной
оболочки
полугруппы
найдётся
конечное
подмножество
$M\subset G$, такое что
$N\subset\mathrm{abs}\left(  M\right)  $. Так
как $\rho\left(  \mathrm{abs}\left(  M\right)  \right)
=\rho\left(  M\right)  =r\left(  M\right)  =0$, то $\rho\left(
N\right)  =0$. Поскольку
$\operatorname*{span}G$ лежит в
$\mathcal{R}_{\mathrm{hc}^{r}+\mathrm{cq}^{a}}\left(  A\right)  $,
ССР
непрерывен в
$M $ для любого
$M \in\mathfrak{K}\left(  \operatorname*{span}G\right)  $
согласно
теореме \ref{ap},
откуда ясно,
что алгебра
$\operatorname*{span}G$
компактно квазинильпотентна.
\end{proof}

Нам
неизвестно,
непрерывен
ли ССР в $M\in\mathfrak{K}\left(
A\right)  $, если
выполнено
неравенство
$\rho(M/\mathcal{R}_{\mathrm{bw}}\left(  A\right)  )<\rho(M)$.

\end{document}